\newtheorem{thm}{Theorem}[section]
\newtheorem{prop}[thm]{Proposition}
\newtheorem{lem}[thm]{Lemma}
\newtheorem{cor}[thm]{Corollary}
\theoremstyle{definition}
\newtheorem{dfn}[thm]{Definition}
\newtheorem{rmk}[thm]{Remark}
\newtheorem{ass}[thm]{Assumption}
\def\bal#1\eal{\begin{align}#1\end{align}}              
\def\baln#1\ealn{\begin{align*}#1\end{align*}}          
\def\bml#1\eml{\begin{multline}#1\end{multline}}        
\def\bmln#1\emln{\begin{multline*}#1\end{multline*}}  
\def\bga#1\ega{\begin{gather}#1\end{gather}}
\def\bgan#1\egan{\begin{gather*}#1\end{gather*}}
\newcommand*{\bigcdot}{}
\DeclareRobustCommand*{\bigcdot}{%
	\mathbin{\mathpalette\bigcdot@{}}%
}
\newcommand*{\bigcdot@scalefactor}{.7}
\newcommand*{\bigcdot@widthfactor}{1.15}
\newcommand*{\bigcdot@}[2]{%
	\sbox0{$#1\vcenter{}$}
	\sbox2{$#1\cdot\m@th$}%
	\hbox to \bigcdot@widthfactor\wd2{%
		\hfil
		\raise\ht0\hbox{%
			\scalebox{\bigcdot@scalefactor}{%
				\lower\ht0\hbox{$#1\bullet\m@th$}%
			}%
		}%
		\hfil
	}%
}
\newcommand{\X}{\mathcal X_\varphi\xspace}
\newcommand{\W}{\ensuremath{\stackrel{\bigcdot}{W}\!\!\mbox{}^{1,2}}}
\newcommand{\de}{{\rm d}}
\newcommand{\R}{\ensuremath{\mathbb R}\xspace}
\newcommand{\N}{\ensuremath{\mathbb N}\xspace}
\newcommand{\LL}{\ensuremath{\mathbb L}\xspace}
\newcommand{\beq}{\begin{equation}}
\newcommand{\eeq}{\end{equation}}
\newcommand{\bere}{\begin{rmk}}
\newcommand{\ere}{\end{rmk}}
\def\br#1\er{\textcolor{red}{#1}} 
\def\bl#1\el{\textcolor{blue}{#1}} 
\title[Spacelike graphs with prescribed mean curvature]{Spacelike graphs  with prescribed mean curvature  on exterior domains in the Minkowski spacetime} 
\author[R. Bartolo]{Rossella Bartolo}
\address{Dipartimento di Meccanica, Matematica e Management, \hfill\break\indent
	Politecnico di Bari, Via Orabona 4, 70125, Bari, Italy \vspace{0.5cm}}
\email{rossella.bartolo@poliba.it}
\author[E. Caponio]{Erasmo Caponio}
\email{erasmo.caponio@poliba.it}
\author[A. Pomponio]{Alessio Pomponio}
\email{alessio.pomponio@poliba.it}
\thanks{This work is partially supported by  PRIN 2017JPCAPN {\em Qualitative and quantitative aspects of nonlinear
PDEs}}
\keywords{Minkowski spacetime, mean curvature, spacelike graphs, exterior domain, Dirichlet problem.}
\subjclass[2010]{35J93, 53C50}
\begin{document}

\begin{abstract}
 We consider a Dirichlet problem for the mean curvature operator in  the Minkowski spacetime,   obtaining  a necessary and sufficient condition for the   existence of a spacelike solution,  with prescribed mean curvature, 
which is the   graph of a function   defined  
on a   domain  equal to  the complement in $\R^n$ of the union of a finite number of bounded Lipschitz domains. The mean curvature $H=H(x,t)$ is assumed to have   absolute value  controlled from above by a locally bounded, $L^p$-function, $p\in [1,2n/(n+2)]$,  $n\geq 3$.  
\end{abstract}
\maketitle

\section{Introduction and statement of the results}
Due to their importance in general relativity, spacelike hypersurfaces with constant mean curvature or, more generally, with prescribed mean curvature $H$, have been extensively studied since Lichnerowicz's paper \cite{Li44} (cf.  \cite{MaTi80}). Maximal spacelike hypersurfaces ($H=0$) have also attracted the interest of researchers because of  their similarity with minimal hypersurfaces in the Riemannian setting. 
In fact, they are  critical points of the area functional and,  under some curvature assumptions on the ambient spacetime, locally maximize it among all nearby spacelike hypersurfaces having the same boundary (cf. \cite{BrFl76}).  Two  very relevant examples  in this perspective  are   the so-called Calabi-Bernstein problem in the Minkowski spacetime \cite{Ca70, ChYa76} and 
its counterpart with constant mean curvature hypersurfaces \cite{Treibe82}.  
 Since then a lot of related papers have been  appearing; just to recall a few, we refer here to  the  new proofs of the Calabi-Bernstein theorem for surfaces given  in \cite{Ro96, AlPa01},  as well as  to   related Calabi-Bernstein type results in different ambient spaces \cite{EckHui91, AlRoSa95, AlbAl09, CaRoRu11, CaCaLi11} and  to  some existence results in Minkowski spacetime for entire or radial spacelike graphs  under different growth conditions on the mean curvature \cite{BDD,A,A2,P,MP}.

The Dirichlet problem on a bounded open set  for spacelike hypersurfaces, described  as the graph of a function,  with prescribed mean curvature  was studied in the Minkowski spacetime \cite{BarSim83}, in spacetimes conformal to an orthogonal  splitting \cite{Gerhar83} and  for some cosmological spacetimes \cite{Ba84}.  A fairly general case  was considered in \cite{Bartni88}. 
Recently, non-smooth critical point theory \cite{Sz86} has been used in \cite{BeJeMa14} to obtain existence and multiplicity of  spacelike solutions in the Minkowski spacetime for the Dirichlet problem  with homogeneous boundary data on a $C^2$ domain,  when the mean curvature is a  function depending on a parameter.
We would like to emphasize that the result in \cite{BarSim83}  does not require assumptions on the regularity of the boundary. Namely, boundary values are considered according to the definition given in \cite[p. 133]{BarSim83} which allows  one   dealing with quite general open bounded subsets of $\R^n$ (for instance bounded domains  with just continuous boundaries are admitted). 

Among other results, in \cite{BarSim83} it is also proved  that if a variational solution, with mean curvature not depending on the time coordinate $t$, contains a segment of light ray, then it contains the ray extended to the boundary or to infinity. 
This property will be fundamental in the proof of our main result,   Theorem~\ref{maint},  dealing with solutions  vanishing  at infinity of the Dirichlet problem for spacelike  hypersurfaces on an unbounded open subset of $\R^n$.  Theorem~\ref{maint}  can  be considered as an extension of \cite[Theorem 4.1]{BarSim83} to  exterior  domains. A solution is indeed obtained  by a standard minimization argument  (Proposition~\ref{pr:min})  applied to  functional $\mathcal I$ in \eqref{I} which, differently from the area functional, is well-defined  on functions with  square integrable gradient on the exterior domain (see Section~\ref{s3}).
The boundary conditions are as usual encoded in the functional setting adopted (see Section~\ref{s2}). As we need    an extendibility property  (Lemma~\ref{Lext}),   we  reinforce  a bit the regularity of the domain  w.r.t.  \cite{BarSim83}, by considering a Lipschitz boundary. Functional  $\mathcal I$  is then defined on a convex, closed subset (Proposition~\ref{weakly})  of an affine subspace of the homogeneous Sobolev space of locally integrable functions with square integrable partial derivatives which share the same trace on the boundary of the exterior domain. The tangent space of this affine manifold  is the Hilbert space obtained as the completion,   w.r.t.  the $L^2$-norm  of the gradient, of the space of test functions on the exterior domain. This fact allows us to recover  some   embedding  properties  (Lemma~\ref{prop})  in a similar variational setting exploited in  \cite{BoPo16}   to  find   solutions of a  Born-Infeld equation in $\R^n$  (see the end of this introduction for more details). 

Let $(\LL^{n+1}, \langle\cdot,\cdot\rangle)$ be the $(n+1)$-dimensional Minkowski spacetime with the following sign convention:  $\langle(\tau,v),(\tau,v)\rangle =-\tau^2+|v|^2$ for  $\tau\in \R$,  $v\in \R^n$, where $|\cdot |$ is the Euclidean norm in $\R^n$. A smooth immersion $\phi\colon\Sigma\rightarrow \LL^{n+1}$ of an $n$-dimensional connected manifold $\Sigma$ is a {\em spacelike hypersurface} if the metric induced by $\phi$ is a Riemannian metric on $\Sigma$.
Let $A$ be the shape operator of $\Sigma$ and  $H:= -\frac{1}{n}{\rm tr}A$ its  {\em mean curvature}.
Assume that $\Sigma$ is an open subset of $\R^n$ and $\phi(x)=\big(x,u(x)\big)$, $x\in\Sigma$; when $u$ is at least  of class  $C^2$, the mean curvature  of this  hypersurface is then equal to 
\[
{\rm div}\left(\frac{\nabla u}{\sqrt{1-|\nabla u|^2}}\right)\ =\ nH(x,u),
\]
where ${\rm div}(\cdot)$ is the divergence operator in $\R^n$.

Hereafter by a {\em domain} in $\R^n$ we mean an open and connected subset. Given a domain $\Omega$ and a $C^1$  function $u$ on it, its graph $t=u(x_1,\ldots,x_n)=u(x)$ defines a $C^1$ spacelike hypersurface in $\LL^{n+1}$ if and only if the Euclidean gradient of $u$ satisfies $|\nabla u(x)|<1$, for all $x\in \Omega$. In this case, we will say that  $u$ is a {\em  spacelike function} and its graph is then called a spacelike hypersurface. 

Instead, a  locally Lipschitz function $u$ is said {\em weakly spacelike} if $|\nabla u|\leq 1$, for {a.e.}  $x\in\Omega$.

We also need the following definition.

\begin{dfn}
Let $V\subset \R^n$; a  function $\psi\colon V\to \R$ is said {\em  spacelike displacing} if its graph is an acausal set,   namely no couple of its points  can be joined by a timelike or lightlike segment.   This is equivalent to  $|\psi(x)-\psi(y)|<|x-y|$, for all $x,y\in V$, $x\neq y$. Moreover, 
$\psi:V\to\R$ is said {\em spacelike displacing in $W\subset \R^n$} if $|\psi(x)-\psi(y)|<|x-y|$,  for all $x,y\in V$, $x\neq y$,  with the inner points of the line segment $\overline{xy}$ contained in $W$.  
\end{dfn}
\bere\label{nomi}
Notice that  in \cite{BarSim83}  $C^1$ spacelike  functions are named  {\em strictly spacelike}, while spacelike displacing functions in an open subset $V$ of $\R^n$ (and defined on the same $V$) are called {\em spacelike}  (although the graph of  a spacelike displacing function can have degenerate tangent spaces). Furthermore, note  that  if $V$ is open, the graph $G_\psi$ of a continuous  spacelike displacing function  in $V$, $\psi:V\to\R$, is spacelike in the usual sense for  $C^0$ hypersurfaces,   i.e., for each $p\in G_\psi$ there exists a neighbourhood $U$ in $\LL^{n+1}$ such that $G_\psi\cap U$ is acausal and edgeless in $U$ (cf., e.g.,  \cite[p. 213]{EscGal92}, \cite[Definition 14.28]{BeErEa96}). Nevertheless, if $V$ is not convex, the graph of a spacelike displacing $\psi:V\to\R$ in $V$ can be not acausal.
\ere

We recall the definition of a Lipschitz domain.

\begin{dfn}\label{lipdom}
An open subset  $U\subset\R^n$ is  said {\em Lipschitz} if for each $p\in \partial U$ there exist an open ball $B(p,r)\subset \R^n$ and a Lipschitz function $f\colon B(p,r) \to \R$ such that $B(p,r)\cap U =f^{-1}((0,+\infty))$.   
\end{dfn}
\bere\label{sepr}
If $U$ is a Lipschitz open subset, then $\partial U=\partial\overline U$ (see, e.g., \cite[Remark 9.59]{Leoni17}). For further use, notice that if  $U\subset \R^n$ is Lipschitz, then $\partial U=\partial (\R^n\setminus \overline U)$. 
\ere

We  deal  with Lipschitz exterior domains of $\R^n$. More precisely, we require the following assumption.

\begin{ass}\label{omega}
We  consider  an exterior domain in $\R^n$, $n\geq 3$, defined by means of a finite collection of  bounded Lipschitz domains $\Omega_i$, such that $\overline{\Omega}_i\cap \overline{\Omega}_j= \emptyset$ for all $i,j\in \{1,\ldots, m\}$, $i\neq j$, $m\geq 1$.
Let
\[
\Omega :=\bigcup_{i=1}^{m}\Omega_i \quad \text{and} \quad      \Omega_c:=\R^n\setminus\overline{\Omega}. \]
Notice that by Remark \ref{sepr} $\partial\Omega_c=\partial\Omega$.
\end{ass}

Next let us set our problem.  We consider  the Dirichlet problem
\begin{equation}\label{star}
\begin{cases}
\displaystyle {\rm div}\left(\frac{\nabla u}{\sqrt{1-|\nabla u|^2}}\right)\ =\ nH(x, u)
& \mbox{ in $\Omega_c$}\\
 \displaystyle u=\varphi & \mbox{ on $\partial\Omega$}  \\
  \displaystyle \lim_{|x|\rightarrow +\infty}  u(x)=0
 \end{cases}
\end{equation}
where $\varphi: \partial\Omega\rightarrow\R$ and
$H:\Omega_c\times\R\rightarrow\R$ is   a   Carath\'eodory function\footnote{$H(\cdot,t)$ is measurable in $\Omega_c$
 for all $t\in\R$ and $H(x,\cdot)$ is continuous in $\R$ for a.e. $x\in \Omega_c$.}  satisfying:
\begin{enumerate}[label=(H), ref=H]
\item \label{h}
there exists $h\in L^s(\Omega_c)
\cap L^\infty_{\rm loc}({\Omega}_c)$, 
$s\in \left[1,\frac{2n}{n+2}\right]$,
such that
$$
n|H(x,t)|\leq h(x) \quad\text{for a.e. $x\in \Omega_c$  and all $t\in\R$}.
$$
\end{enumerate}

In order to introduce our functional framework (see Section \ref{s2} for more details and remarks),  let us recall that   $\stackrel{\bigcdot }{W}\!\!\mbox{}^{1,2}(\Omega_c)$  is   the homogeneous Sobolev space  of  distributions on $\Omega_c$ with partial derivatives in $L^2(\Omega_c)$. Some useful properties of such a space can be found, for example, in \cite[\S 11]{Leoni17}. \\
Let us finally introduce the space $\mathcal X$ of admissible functions in the variational setting  for   problem \eqref{star}:
\begin{equation}\label{storto}
\mathcal X:=\ \W(\Omega_c)\cap L^{2^*}(\Omega_c)\cap \{u\in C^{0,1}_{\mathrm{loc}}(\Omega_c): \|\nabla u\|_\infty\leq 1\};
\end{equation}
here  $C^{0,1}_{\mathrm{loc}}(\Omega_c)$  denotes the space  of  locally Lipschitz functions on $\Omega_c$ and as usual $2^*=2n/(n-2)$ is the Sobolev critical exponent.

Now we give the definition of {\em weak solution} of \eqref{star} (see also Remark~\ref{rem:ws}). 

\begin{dfn}\label{def:ws}
A function $u: \Omega_c \to \R$ is called a {\em weak solution} of \eqref{star}  if it belongs to $ \mathcal X$, $\varphi$ is the trace of $u$ and  
\begin{equation}\label{ws}
\int_{\Omega_c}\frac{\nabla u \cdot \nabla v}{\sqrt{1-|\nabla u|^2}}\;{\rm d}x
+n\int_{\Omega_c}H(x,u)v\;{\rm d}x=0,
\quad \hbox{ for all }v\in C^\infty_c(\Omega_c).
\end{equation}
\end{dfn}

We state our main result.
\begin{thm}\label{maint}
Let $\Omega_c$ satisfy Assumption~\ref{omega} and  
$H:\Omega_c\times\R\rightarrow\R$ be such that \eqref{h} holds. 
Then, there exists a spacelike\footnote{ In the terminology of \cite{BarSim83}  such a solution  is strictly spacelike, cf. Remark~\ref{nomi}.}
weak solution    of \eqref{star}  if and only if 
$\varphi:\partial\Omega\rightarrow\R$  is  the trace 
of a function $w\in\mathcal{X}$ and moreover $\varphi$ is spacelike displacing in $\Omega_c$ when $\Omega$ is not convex. 
\end{thm}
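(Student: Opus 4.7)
The proof splits into two implications, and I sketch each.

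\smallskip

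For necessity, given a spacelike weak solution $u\in\mathcal X$, I take $w:=u$ so that $\varphi$ is the trace of $w$. The displacing condition is vacuous when $\Omega$ is convex, since any open segment joining two distinct boundary points of a convex $\Omega$ lies in $\overline\Omega$, not in $\Omega_c$. When $\Omega$ is not convex and $x,y\in\partial\Omega$ have open segment contained in $\Omega_c$, the Lipschitz character of $u$ extended along this segment gives $|\varphi(x)-\varphi(y)|\leq|x-y|$; equality would force the graph of $u$ to contain the null segment joining $(x,\varphi(x))$ and $(y,\varphi(y))$, and the null-segment rigidity of \cite[Theorem~4.1]{BarSim83} would then propagate a lightlike ray into the interior of $\Omega_c$, contradicting $|\nabla u|<1$ pointwise.

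\smallskip

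For sufficiency, starting from $w\in\mathcal X$ with trace $\varphi$, Proposition~\ref{pr:min} yields a minimizer $u$ of the functional $\mathcal I$ in \eqref{I} over the closed convex subset of $\mathcal X$ consisting of functions with trace $\varphi$. Such a minimizer is a priori only weakly spacelike ($|\nabla u|\leq 1$ a.e.); the Sobolev embedding $\mathcal X\hookrightarrow L^{2^*}(\Omega_c)$ combined with the range $s\in[1,2n/(n+2)]$ in \eqref{h} is what makes the lower-order term of $\mathcal I$ well defined and weakly continuous. Once strict spacelikeness is established, the equation \eqref{ws} follows by a standard variation: for $v\in C^\infty_c(\Omega_c)$, the perturbation $u+\sigma v$ is admissible for $|\sigma|$ small enough, because $|\nabla u|$ is uniformly bounded away from $1$ on $\mathrm{supp}\,v$, and differentiating $\mathcal I(u+\sigma v)$ at $\sigma=0$ produces \eqref{ws}.

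\smallskip

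The crux is the upgrade from weak to strict spacelikeness. Suppose the graph of the minimizer $u$ contains a lightlike segment. By the maximal extension principle from \cite[Theorem~4.1]{BarSim83}, the segment propagates along its own direction until each of its endpoints reaches $\partial\Omega$ or escapes to infinity. If one endpoint is at infinity, then $u$ grows with unit slope along the ray, contradicting the prescribed decay $\lim_{|x|\to\infty}u(x)=0$. If both endpoints $x_0,x_1$ lie on $\partial\Omega$, then $|\varphi(x_0)-\varphi(x_1)|=|x_0-x_1|$ with open segment inside $\Omega_c$; this configuration is geometrically impossible when $\Omega$ is convex, and ruled out by the spacelike displacing hypothesis on $\varphi$ otherwise. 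Either way we get a contradiction, so $|\nabla u|<1$ everywhere.

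\smallskip

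The main obstacle I anticipate is precisely this last step: adapting the Bartnik--Simon null-segment propagation, originally formulated on bounded Dirichlet problems, to the exterior setting, where the decay at infinity built into $\mathcal X$ has to dispose of rays that escape to infinity, while the spacelike displacing hypothesis handles rays joining two boundary components. The dichotomy in Theorem~\ref{maint}---vacuous condition for convex $\Omega$, explicit requirement for non-convex $\Omega$---is exactly the shadow of these two cases.
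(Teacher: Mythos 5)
Your overall strategy is the same as the paper's: obtain a minimizer of $\mathcal I$ over $\mathcal X_\varphi$ via Proposition~\ref{pr:min}, use the Bartnik--Simon description of the singular set of a weakly spacelike variational solution to reduce the question to light rays contained in the graph, rule out rays escaping to infinity by the decay built into $\mathcal X$, and rule out chords between boundary points by convexity or by the spacelike displacing hypothesis; the necessity direction is also handled as in the paper.

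There is, however, one genuine missing step at the crux. The light-ray extension theorem of \cite{BarSim83} that you invoke is proved there for \emph{variational} solutions whose mean curvature does \emph{not} depend on the time coordinate $t$, whereas you apply it directly to the minimizer $u$ of $\mathcal I$, whose lower-order term involves $H(x,t)$. The paper bridges this with Lemma~\ref{min} (a Brezis--Mawhin type argument): the minimizer $u$ of $\mathcal I$ is shown to be also a minimizer of the frozen functional $\mathcal I^*$ whose density is $H^*(x)=nH(x,u(x))$, which is $t$-independent, and only then does \cite[Theorem 3.2]{BarSim83} apply to propagate null segments to $\partial\Omega$ or to infinity. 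Without this reduction (or a substitute for it) the propagation step is not justified for $t$-dependent $H$; note also that the extension statement is \cite[Theorem 3.2]{BarSim83}, while Theorem 4.1 there supplies the local estimates identifying the singular set. A minor further point: in the necessity direction your detour through null-segment rigidity is unnecessary, since a spacelike solution satisfies $|\nabla u|<1$ at every interior point of the segment $\overline{xy}$, so integrating along the segment already yields the strict inequality $|\varphi(x)-\varphi(y)|<|x-y|$ directly, which is what the paper does.
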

\begin{rmk}
Let us emphasize some points about Theorem~\ref{maint}.
\begin{itemize}
	\item[(1)] The boundary condition, $u=\varphi$ on $\partial \Omega$, in \eqref{star} is meant in a trace  sense, but as we will show in the next section a weak solution  $u$  can   indeed be continuously Lipschitz extended to $\partial \Omega$ (Lemma \ref{Lext}). Therefore, $u_{|\partial\Omega_c}=\varphi$, hence $\varphi$ is a posteriori Lipschitz continuous on $\partial \Omega$.
	\item[(2)] The limit at infinity in \eqref{star} is intended in the classical sense.
	\item[(3)] 
 As shown in \cite[p. 148]{BarSim83}  (see also \cite[p. 5]{Bon-Iac2}), as $H$ is locally bounded, by elliptic regularity theory, a spacelike weak solution  $u$  belongs to $W^{2,2}_{\mathrm{loc}}(\Omega_c)$ and $\nabla u$  is locally H\"older. Moreover,  if $H\in C^{k,\alpha}(\Omega_c\times\R)$, $k\in\N\cup\{\infty\}$, then $u\in C^{k+2,\alpha}(\Omega_c)$. 
	\item[(4)] If $H$ does not depend on $t$, our statement holds just assuming that $H\in L^s(\Omega_c) \cap L^{\infty}_{\rm loc}(\Omega_c)$. In particular, it holds for $H=0$ and, in such a case, gives the existence of a maximal hypersurface on the exterior domain $\Omega_c$.
		Previous existence (and uniqueness, with respect to a given asymptotic profile) results  for the Dirichlet problem of  maximal graphs on an exterior domain of the Minkowski spacetime have been recently obtained in \cite{HonYua19}. Existence and multiplicity results for radial solutions outside a ball, with  homogeneous boundary condition, have been obtained in \cite{YaLeSi20} for a separable-variables $H$ which is also radial in the $x$ variable.
	We are not aware of  other  results for the Dirichlet problem in an exterior domain when $H\neq 0$. 
	\end{itemize}
\end{rmk}
 Since when $\varphi$ is $(1-\epsilon)$-Lipschitz continuous on $\partial\Omega$, for a $\epsilon>0$, we  can ensure that there exists $u\in\mathcal{X}$ such that  $u_{|\partial\Omega}=\varphi$,   we have the following: 
\begin{cor}\label{mainc}
Under the assumptions on $\Omega_c$ and $H$ in  Theorem~\ref{maint}, let  $\varphi:\partial\Omega\to \R$ be a $(1-\epsilon)$--Lipschitz continuous. Then, there exists a  spacelike weak   solution of \eqref{star}.	
\end{cor}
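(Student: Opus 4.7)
The plan is to reduce the statement to Theorem~\ref{maint}. The hypothesis that $\varphi$ is $(1-\epsilon)$-Lipschitz already gives the strict inequality $|\varphi(x)-\varphi(y)|\le(1-\epsilon)|x-y|<|x-y|$ for all distinct $x,y\in\partial\Omega$, so $\varphi$ is trivially spacelike displacing on the whole of $\partial\Omega$, and a fortiori in $\Omega_c$. The only remaining task is to exhibit an element $w\in\mathcal{X}$ whose trace on $\partial\Omega$ equals $\varphi$.

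For this I would first extend $\varphi$ to $\R^n$ via McShane's formula
\[
\tilde\varphi(x):=\sup_{y\in\partial\Omega}\bigl(\varphi(y)-(1-\epsilon)|x-y|\bigr),
\]
which is $(1-\epsilon)$-Lipschitz on $\R^n$ and agrees with $\varphi$ on $\partial\Omega$. Since $\partial\Omega$ is compact, $\varphi$ is bounded, say $|\varphi|\le M$, and the truncation $\hat\varphi:=\max\{-M,\min\{M,\tilde\varphi\}\}$ is a bounded $(1-\epsilon)$-Lipschitz extension still agreeing with $\varphi$ on $\partial\Omega$. Next, choose a smooth radial cutoff $\eta\colon\R^n\to[0,1]$ equal to $1$ on a ball $B_{R_0}\supset\overline\Omega$ and to $0$ outside $B_{R_0+R}$, with $\|\nabla\eta\|_\infty\le 1/R$, and set $w:=\hat\varphi\,\eta$. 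Then $w$ is compactly supported on $\R^n$ and satisfies the pointwise a.e.\ bound
\[
|\nabla w|\ \le\ \eta(1-\epsilon)+|\hat\varphi|\,|\nabla\eta|\ \le\ (1-\epsilon)+\frac{M}{R},
\]
so the elementary choice $R\ge M/\epsilon$ ensures $\|\nabla w\|_\infty\le 1$.

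Being compactly supported and Lipschitz on $\R^n$, the restriction of $w$ to $\Omega_c$ lies in $L^{2^*}(\Omega_c)\cap C^{0,1}_{\mathrm{loc}}(\Omega_c)$ and has gradient in $L^2(\Omega_c)$; since $\eta\equiv 1$ in a neighbourhood of $\overline\Omega$, its trace on $\partial\Omega$ coincides with $\varphi$. Thus $w\in\mathcal{X}$, both hypotheses of Theorem~\ref{maint} are verified, and the existence of a spacelike weak solution of \eqref{star} follows at once. The only mildly delicate point in the whole argument is balancing $M=\|\varphi\|_\infty$ against the decay scale $R$ of the cutoff so as not to exceed the spacelike threshold $\|\nabla w\|_\infty\le 1$; this is the main (and essentially the only) obstacle, and it is handled by the choice $R\ge M/\epsilon$.
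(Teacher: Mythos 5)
Your proposal is correct and follows essentially the same route as the paper's proof: extend $\varphi$ to a bounded $(1-\epsilon)$--Lipschitz function on $\R^n$, multiply by a smooth cutoff whose gradient is of order $1/R$, and choose $R$ large enough relative to $\|\varphi\|_\infty/\epsilon$ so that the product has gradient bounded by $1$, hence lies in $\mathcal X$. The only cosmetic differences are that you make the McShane extension and the truncation explicit, and you also spell out why $(1-\epsilon)$--Lipschitz continuity gives the spacelike displacing condition, which the paper leaves implicit.
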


A further relevant physical motivation to the problem under study is given by the  differential operator
\begin{equation*}
\mathcal Q(u)={\rm div}\left(\dfrac{\nabla u}{\sqrt{1-|\nabla u|^2}}\right)
\end{equation*}
which naturally appears in the Born-Infeld theory.
Almost a century ago, Born and Infeld introduced a new electromagnetic theory in a series of papers \cite{Bnat,BInat,B,BI}
as an alternative to
the classical Maxwell theory. Such a theory was proposed  as a nonlinear model of electrodynamics  having the notable feature of being a fine answer to the well-known infinity energy problem   (the electromagnetic field generated by a point charge has finite energy  in Born-Infeld theory).

In last years many authors have been focusing their attention on problems related to  $\mathcal Q$ in the whole $\R^n$, $n\ge 1$. In particular, some results for
\[
-{\rm div}\left(\dfrac{\nabla u}{\sqrt{1-|\nabla u|^2}}\right)=\rho, \qquad\hbox{ in }\R^n
\]
can be found in \cite{K,BoPo16,K-corr,Bon-Iac2, BCF, BDPR}, under different assumptions on $\rho$. Here $\rho$ can be considered as an assigned charges source. We also refer to \cite{APS}, where the Born-Infeld equation is coupled with the nonlinear Schr\"odinger one.
 Therefore, Theorem \ref{maint} can also be seen  as an existence result for the Born-Infeld problem on an  exterior  domain with assigned boundary conditions.

\section{Functional setting}\label{s2}
Let us denote by  $\de x$  the Lebesgue measure on $\R^n$ and, unless differently specified, by $\|\cdot\|_r$  the $L^r$-norm in the Lebesgue space $L^r(\Omega_c)$, $1\leq r\leq +\infty$, where $\Omega_c$ is defined in Assumption~\ref{omega}. By $\|\nabla\cdot\|_r$ we mean $\||\nabla\cdot|\|_r$.

Let $V\subset\R^n$ be open and unbounded and let us consider the homogeneous Sobolev  space $D^{1,2}(V)$  defined as  the completion of  $C_c^\infty(V)$, the space of smooth functions with compact support in $V$,   with respect to the $L^2$-norm of the gradient. Moreover, let us denote by $\mathscr{D}'(V)$ the space of distribution on $V$.

The following properties  hold for $D^{1,2}(\Omega_c)$:
\begin{prop}\label{tr0} 
\mbox{}
\begin{enumerate}[label=(\arabic{*}), ref=\arabic{*}]
		\item \label{d1} $D^{1,2}(\Omega_c)\hookrightarrow L^{2^*}(\Omega_c)$; moreover 
		$D^{1,2}(\Omega_c)\hookrightarrow \mathscr{D}'(\Omega_c)$ and the partial distributional derivatives of its elements are represented by functions in $L^2(\Omega_c)$;
		\item \label{d2}all $u\in D^{1,2}(\Omega_c)$ vanish at infinity,  i.e., ${\rm meas}\{x\in \Omega_c:|u(x)|>t\}<+\infty$,  for all $t>0$;
	\item \label{d3}for all $u\in D^{1,2}(\Omega_c)$, the trace ${\rm Tr}(u)$  on $\partial\Omega_c$ is well-defined and equal to $0$.
\end{enumerate} 
\end{prop}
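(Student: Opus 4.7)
The plan is to realize $D^{1,2}(\Omega_c)$ concretely as a function space rather than an abstract completion, by exploiting the Gagliardo--Nirenberg--Sobolev inequality on $\R^n$, and then to read off the three statements from this identification together with classical trace theory on bounded Lipschitz domains.

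First I would establish (\ref{d1}). For $u\in C_c^\infty(\Omega_c)$, its extension by zero lies in $C_c^\infty(\R^n)$, so the Sobolev inequality $\|u\|_{2^*}\leq C_n\|\nabla u\|_2$ holds on $\R^n$ and hence on $\Omega_c$. Consequently any $\|\nabla\cdot\|_2$-Cauchy sequence in $C_c^\infty(\Omega_c)$ is also Cauchy in $L^{2^*}(\Omega_c)$, and passing to the completion identifies each $u\in D^{1,2}(\Omega_c)$ with an element of $L^{2^*}(\Omega_c)$; the embedding is continuous. In particular $D^{1,2}(\Omega_c)\subset L^1_{\mathrm{loc}}(\Omega_c)\subset \mathscr{D}'(\Omega_c)$. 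For an approximating sequence $u_k\to u$ with $\nabla u_k\to g$ in $L^2(\Omega_c)^n$, passing to the limit in the integration-by-parts identity $\int_{\Omega_c} u_k\,\partial_i\phi\,\de x = -\int_{\Omega_c}\partial_i u_k\,\phi\,\de x$ for $\phi\in C_c^\infty(\Omega_c)$ identifies the distributional derivative $\partial_i u$ with $g_i\in L^2(\Omega_c)$. Statement (\ref{d2}) then drops out from (\ref{d1}) by Chebyshev's inequality, since $\mathrm{meas}\{x\in\Omega_c: |u(x)|>t\}\leq t^{-2^*}\|u\|_{2^*}^{2^*}<+\infty$ for every $t>0$.

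For (\ref{d3}) I would localize near each boundary component. For each $i\in\{1,\ldots,m\}$ choose a bounded Lipschitz open neighbourhood $U_i\supset\overline{\Omega_i}$ whose closure is disjoint from $\overline{\Omega_j}$ for $j\neq i$; then $U_i\cap\Omega_c = U_i\setminus\overline{\Omega_i}$ is a bounded Lipschitz domain (using Definition \ref{lipdom} and the observation that $\R^n\setminus\overline{\Omega_i}$ inherits the same Lipschitz boundary as $\Omega_i$). Every $u\in D^{1,2}(\Omega_c)$ restricts there to a function in $W^{1,2}(U_i\cap\Omega_c)$, since $\nabla u\in L^2$ by (\ref{d1}) and $u\in L^{2^*}(\Omega_c)\subset L^2_{\mathrm{loc}}(\Omega_c)$. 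The standard continuous trace $W^{1,2}(U_i\cap\Omega_c)\to L^2(\partial(U_i\cap\Omega_c))$ then applies, and it annihilates $C_c^\infty(\Omega_c)$, whose elements are supported away from $\partial\Omega$. By density, the trace of $u$ on $\partial\Omega_i$ vanishes, and assembling the contributions from the finitely many components proves (\ref{d3}).

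The delicate point is the trace step: a priori, elements of $D^{1,2}(\Omega_c)$ control only the $L^2$-norm of the gradient, not of the function itself, so there is no direct trace theorem to invoke on the unbounded domain. Localization to bounded Lipschitz pieces around each connected component of $\partial\Omega$, where $u$ becomes an ordinary $W^{1,2}$-function, is what makes the standard trace machinery available; the rest is routine.
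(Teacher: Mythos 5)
Your argument is correct and follows essentially the same route as the paper: the Sobolev inequality on test functions plus passage to the completion for (1), a Chebyshev-type argument for (2) (the paper states the contrapositive), and localization to a bounded Lipschitz region where standard $W^{1,2}$ trace theory applies for (3). The only cosmetic difference is in (3): the paper multiplies by a single cutoff supported in a large ball and concludes $vu\in H^1_0\big(B(0,2R)\setminus\overline{\Omega}\big)$, whereas you localize around each component and invoke continuity of the trace operator together with density of $C_c^\infty(\Omega_c)$; both versions rest on the same observation that convergence in $L^{2^*}(\Omega_c)$ (from part (1)) gives $L^2$-convergence on the bounded neighbourhood of $\partial\Omega$, so the approximating test functions converge in the full $W^{1,2}$-norm there.
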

\begin{proof}
\eqref{d1} Since $n\geq 3$, by the Sobolev embedding theorem,  $D^{1,2}(\Omega_c)$ is continuously embedded in $L^{2^*}(\Omega_c)$ (see \cite[\S 15.1]{Mazya11}). As a consequence, 
every $u\in D^{1,2}(\Omega_c)$ can be identified with a distribution on $\Omega_c$ and, if $(u_k)_k\subset D^{1,2}(\Omega_c)$ is such that  $\|\nabla u_k-\nabla u\|_{2}\to 0$, then it converges to $u$ in distributional sense. Let now   $(\varphi_k)_k\subset C_c^\infty(\Omega_c)$ be a  Cauchy sequence representing $u\in D^{1,2}(\Omega_c)$; then $(\varphi_k)_k$  is Cauchy in $L^{2^*}(\Omega_c)$ and  for all 
$\varphi\in C_c^\infty(\Omega_c)$, $i\in\{1,\dots,n\}$,  we have 
\bmln   (\partial_i u,\varphi) := -\int_{\Omega_c} u\partial_i\varphi \,\de x=-\lim_k\int_{\Omega_c}  \varphi_k\partial_i\varphi \,\de x   \\   =\lim_k\int_{\Omega_c} \partial_i \varphi_k\varphi \,\de x=\int_{\Omega_c} \psi_i\varphi \,\de x,
\emln where $\psi_i\in L^2(\Omega_c)$ is the element representing the Cauchy sequence $\partial_i \varphi_k$ in $L^2(\Omega_c)$.  

\eqref{d2} Notice that $u\not\in L^{2^*}(\Omega_c)$ if, by contradiction, there exists $\bar t>0$ such that 	${\rm meas}\{x\in \Omega_c:|u(x)|>\bar t\}=+\infty$. 

\eqref{d3} Take $R>0$ such that $\overline{\Omega} \subset B(0,R)$ and consider $v$, the restriction to $\Omega_c$ of a smooth function assuming values in the interval $[0,1]$ and equal to $1$ on $B(0,R)$, with compact support on $B(0,2R)$ and such that $|\nabla v|<1$.
Let  $(u_k)_k\subset C_c^\infty(\Omega_c)$ be a sequence  converging to $u$ in $D^{1,2}(\Omega_c)$. Then $v u_k\to vu$ both in $D^{1,2}(\Omega_c)$ and $L^{2^*}(\Omega_c)$, thus   $vu_k\to vu$ in the $H^1$-norm on $B(0,2R)\setminus \overline{\Omega}$, which implies that 
$vu\in H^1_0(B(0,2R)\setminus \overline{\Omega})$. As $\partial\Omega_c$ is Lipschitz, $\mathrm{Tr}(vu)=0$ (cf., e.g., \cite[Theorem~18.7]{Leoni17}) and since $v=1$ on $\partial \Omega$, we get that $\mathrm{Tr}(u)_{|\partial \Omega}=0$.
\end{proof}

Actually, the above inclusions and properties  cha\-ra\-cte\-ri\-ze  $D^{1,2}(\Omega_c)$. 
In the following we give some useful properties of the homogeneous Sobolev space  $\stackrel{\bigcdot }{W}\!\!\mbox{}^{1,2}(\Omega_c)$ and a relation between it and $D^{1,2}(\Omega_c)$ which is crucial to our purposes.

\begin{rmk}\label{mainrmk}
Since $\Omega_c$ is a Lipschitz domain, the trace operator  is well-defined  on 
$\W(\Omega_c)$. Indeed, for any $R>0$ such that $\overline{\Omega}\subset B(0,R)$ we have $\W\big(B(0,R)\setminus\overline\Omega\big)=W^{1,2}\big(B(0,R)\setminus\overline\Omega\big)$
(see \cite[Corollary 1.1.11]{Mazya11})\footnote{Notice that a Lipschitz domain according to Definition~\ref{lipdom} is called $C^{0,1}$ domain in \cite{Mazya11}; $C^{0,1}$ bounded domains satisfy the cone property required in \cite[Corollary 1.1.11]{Mazya11} (see \cite[Remark 1, p. 15]{Mazya11}).} and then   $\mathrm{Tr}(u)\in L^2(\partial\Omega_c)$, for all $u\in\ \W(\Omega_c)$.  However, the trace operator is evidently not bounded in $\W(\Omega_c)$. Anyway, if $\|\nabla u_k-\nabla u\|_2\to 0$ and $\|u_k-u\|_{2^*}\to 0$, then $\|\mathrm{Tr}(u_k)-\mathrm{Tr}(u)\|_{L^2(\partial\Omega_c)}\to 0$. Moreover, if $u\in C^0(\overline \Omega_c)$, then $\mathrm{Tr}(u)=u_{|\partial \Omega}$. \end{rmk}
 \begin{prop}\label{trace}
The  space $D^{1,2}(\Omega_c)$ is  given by
$$
D^{1,2}(\Omega_c)=\{u\in \ \stackrel{\bigcdot}{W}\!\!\mbox{}^{1,2}(\Omega_c):  u\in L^{2^\ast}(\Omega_c), \ \mathrm{Tr}(u)=0\}.
$$
\end{prop}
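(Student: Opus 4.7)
The inclusion $\subseteq$ follows directly from Proposition~\ref{tr0}: part \eqref{d1} provides both the embedding $D^{1,2}(\Omega_c)\hookrightarrow L^{2^*}(\Omega_c)$ and the fact that the distributional partial derivatives lie in $L^2(\Omega_c)$ (so a fortiori $D^{1,2}(\Omega_c)\subset \W(\Omega_c)$), while part \eqref{d3} gives $\mathrm{Tr}(u)=0$. The real content is the reverse inclusion, for which my plan is to approximate a generic $u$ on the right-hand side by elements of $C^\infty_c(\Omega_c)$ in the $\|\nabla\cdot\|_2$ norm, via a two-stage truncation-and-approximation scheme.

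\emph{Stage 1 -- truncation at infinity.} Fix $R_0>0$ with $\overline{\Omega}\subset B(0,R_0)$ and, for every $R>R_0$, pick a cut-off $\eta_R\in C^\infty_c(\R^n)$ such that $\eta_R\equiv 1$ on $B(0,R)$, $\mathrm{supp}\,\eta_R\subset B(0,2R)$, $0\le\eta_R\le 1$ and $|\nabla\eta_R|\le C/R$. Since $u\in L^{2^*}(\Omega_c)\subset L^2_{\mathrm{loc}}(\Omega_c)$ and $\nabla u\in L^2(\Omega_c)$, the product $u_R:=\eta_R u$ belongs to $W^{1,2}(\Omega_c\cap B(0,2R))$, and its trace on the boundary of this bounded Lipschitz domain vanishes everywhere: on $\partial\Omega$ because $\mathrm{Tr}(u)=0$ and $\eta_R\equiv 1$ there, and on $\partial B(0,2R)$ because $\eta_R\equiv 0$ near it. The crux is the estimate $\|\nabla u_R-\nabla u\|_2\to 0$ as $R\to\infty$. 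Writing $\nabla u_R-\nabla u=(\eta_R-1)\nabla u+u\nabla\eta_R$, the first term goes to $0$ in $L^2(\Omega_c)$ by dominated convergence; for the second, with $A_R:=B(0,2R)\setminus B(0,R)$ and H\"older's inequality with exponents $(2^*/2,n/2)$,
\[
\|u\,\nabla\eta_R\|_2^2\le \frac{C^2}{R^2}\,\|u\|_{L^{2^*}(A_R)}^2\,|A_R|^{2/n}\le C'\,\|u\|_{L^{2^*}(A_R)}^2\longrightarrow 0,
\]
since $|A_R|^{2/n}\le C''R^2$ absorbs the $1/R^2$ factor while the residual vanishes by absolute continuity of the $L^{2^*}$-integral. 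I expect this to be the main obstacle: it is the only point where the $L^{2^*}$ hypothesis is genuinely needed, and without it the cut-off error at infinity could not be made to vanish.

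\emph{Stage 2 -- approximation on a bounded Lipschitz domain.} Since $\Omega_c\cap B(0,2R)$ is a bounded Lipschitz domain and $u_R$ is a $W^{1,2}$-function on it with zero trace on the whole boundary, the classical characterization of $W^{1,2}_0$ (cf.\ \cite[Theorem~18.7]{Leoni17}) yields $u_R\in W^{1,2}_0(\Omega_c\cap B(0,2R))$. Hence there is a sequence $(\varphi_{R,k})_k\subset C^\infty_c(\Omega_c\cap B(0,2R))\subset C^\infty_c(\Omega_c)$ with $\varphi_{R,k}\to u_R$ in the $W^{1,2}$ norm, in particular in $\|\nabla\cdot\|_2$. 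A standard diagonal extraction then assembles the two stages into a single sequence in $C^\infty_c(\Omega_c)$ converging to $u$ in $\|\nabla\cdot\|_2$, which places $u$ in $D^{1,2}(\Omega_c)$ and closes the reverse inclusion.
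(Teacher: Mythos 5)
Your proof is correct and follows essentially the same route as the paper's: a cut-off $\eta_R$ with $|\nabla\eta_R|\lesssim 1/R$ whose error term is controlled by H\"older's inequality against the $L^{2^*}$-norm of $u$ on the annulus, followed by the trace characterization of $H^1_0$ on the bounded Lipschitz domain $B(0,2R)\setminus\overline{\Omega}$ and approximation by test functions. The only cosmetic difference is that the paper fixes $\epsilon$ and chooses $R$ accordingly rather than sending $R\to\infty$ with a diagonal extraction.
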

\begin{proof}
The first inclusion has  already been  shown in Proposition~\ref{tr0}. For the other one,  let 
 $u\in \ \W(\Omega_c)\cap L^{2^\ast}(\Omega_c)$ be such that $\mathrm{Tr}(u)=0$.  As $u\in L^{2^*}(\Omega_c)$ and $|\nabla u|\in L^2(\Omega_c)$, for any $\epsilon>0$ there exists $R>0$ such that $\overline\Omega\subset B(0,R)$, $\|u\|_{L^{2^*}(\R^n\setminus B(0,R))}<\epsilon$ and 
 $\|\nabla u\|_{L^{2}(\R^n\setminus B(0,R))}<\epsilon$. Let us take $v$ as in the proof of Proposition~\ref{tr0}, with the further requirement that $|\nabla v|\leq 1/R$ on $A_R:=B(0,2R)\setminus B(0,R)$,  so that   $vu\in H^1_0(B(0,2R)\setminus \overline{\Omega})$. Let $(u_k)_k\subset C^{\infty}_c(B(0,2R)\setminus \overline{\Omega})$ converge to $vu$ in the $H^1$-norm. Then,   taking into account that $\|u\|_{L^{2^*}(A_R)}<\epsilon$ and $\|\nabla u_k\|_{L^2(A_R)}\to \|\nabla(vu)\|_{L^2(A_R)}$,  trivially  extending   $u_k$ on $\R^n\setminus B(0,2R)$, we get that   $u_k\to u$ in $D^{1,2}(\Omega_c)$. 
\end{proof}
 Let us now introduce the following subset of $D^{1,2}(\Omega_c)$:
$$\mathcal X_0:=D^{1,2}(\Omega_c)\cap  \{u\in C^{0,1}_{\mathrm{loc}}(\Omega_c): \|\nabla u\|_\infty\leq 1\}.$$ Let $j(u):\R^n\to \R$, $u\in \mathcal X_0$, be defined  as follows:  
\[j(u)(x)=\begin{cases}
u(x)&\text{if $x\in \Omega_c$}\\
0&\text{otherwise} . 
\end{cases}\] 
\begin{lem}\label{isometry}
 The map $j$ is an isometry from $\mathcal X_0$ (as a subset of the Hilbert space $D^{1,2}(\Omega_c)$) into the subset   $D^{1,2}(\R^n)\cap  \{u\in C^{0,1}(\R^n): \|\nabla u\|_{L^\infty(\R^n)}\leq 1\}$ of $D^{1,2}(\R^n)$.
\end{lem}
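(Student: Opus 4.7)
\medskip
\noindent\textbf{Proof plan.}
Given $u\in\mathcal X_0$, I need to verify three facts about $j(u)$: (i) it is globally $1$-Lipschitz on $\R^n$; (ii) it belongs to $D^{1,2}(\R^n)$; (iii) $\|\nabla j(u)\|_{L^2(\R^n)}=\|\nabla u\|_{L^2(\Omega_c)}$, which is the isometry claim since the $D^{1,2}$-norm is the $L^2$-norm of the gradient.

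The first step is to show that $u$ extends continuously to $\overline{\Omega_c}$ with value $0$ on $\partial\Omega$. Since $\Omega_c$ is a Lipschitz domain it is locally quasi-convex near its boundary, so the pointwise bound $|\nabla u|\leq 1$ combined with $u\in C^{0,1}_{\mathrm{loc}}(\Omega_c)$ gives a uniform Lipschitz estimate in a one-sided neighbourhood of $\partial\Omega$, providing a continuous extension $\bar u$ to $\overline{\Omega_c}$. By Proposition~\ref{tr0}(\ref{d3}), $\mathrm{Tr}(u)=0$; by Remark~\ref{mainrmk}, $\mathrm{Tr}(\bar u)=\bar u_{|\partial\Omega}$, hence $\bar u\equiv 0$ on $\partial\Omega$.

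Next I would establish the global $1$-Lipschitz estimate for $j(u)$ by a segment-decomposition argument. For arbitrary $x,y\in\R^n$, the open subset of the segment $[x,y]$ lying in $\Omega_c$ is a countable disjoint union of open sub-segments, each with endpoints either equal to $x$ or $y$ (where $j(u)$ may be nonzero) or lying on $\partial\Omega$ (where $j(u)=0$ by the first step). On each such sub-segment the bound $|\nabla u|\leq 1$ a.e.\ gives that $u$ varies by at most the length of the sub-segment. On the complementary portion of $[x,y]$, which lies in $\overline\Omega$, the function $j(u)$ vanishes. Summing the variations yields $|j(u)(x)-j(u)(y)|\leq |x-y|$, so $j(u)\in C^{0,1}(\R^n)$ with $\|\nabla j(u)\|_{L^\infty(\R^n)}\leq 1$ by Rademacher's theorem.

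Finally, for (ii) and (iii), the zero trace of $u$ on $\partial\Omega$ ensures that the distributional derivative of $j(u)$ on $\R^n$ equals $\chi_{\Omega_c}\nabla u$: for any $\varphi\in C_c^\infty(\R^n)$, splitting $\int_{\R^n}j(u)\,\partial_i\varphi\,\de x=\int_{\Omega_c}u\,\partial_i\varphi\,\de x$ and applying the divergence theorem on $\Omega_c$ (whose Lipschitz regularity legitimises it) produces no boundary contribution because $\mathrm{Tr}(u)=0$. Consequently $\|\nabla j(u)\|_{L^2(\R^n)}=\|\nabla u\|_{L^2(\Omega_c)}$, and since $j(u)\in L^{2^*}(\R^n)$ (because $u\in L^{2^*}(\Omega_c)$ by Proposition~\ref{tr0}(\ref{d1}) and $j(u)=0$ elsewhere) with $L^2$-gradient on $\R^n$, the characterisation of $D^{1,2}(\R^n)$ via Proposition~\ref{trace} (applied with $\Omega_c$ replaced by $\R^n$, where the trace requirement is vacuous) places $j(u)$ in $D^{1,2}(\R^n)$.

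The main delicate points are the continuous-up-to-the-boundary extension (where the Lipschitz regularity of $\partial\Omega$ is used) and the segment decomposition that upgrades the a.e.\ bound $|\nabla u|\leq 1$ on $\Omega_c$ to a global $1$-Lipschitz estimate on $\R^n$; everything else reduces to Proposition~\ref{tr0} and Proposition~\ref{trace}.
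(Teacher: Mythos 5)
Your proof is correct, but it takes a genuinely different route from the paper's. The paper exploits the fact that $D^{1,2}(\Omega_c)$ is \emph{defined} as the completion of $C^\infty_c(\Omega_c)$ in the $L^2$-norm of the gradient: extending an approximating Cauchy sequence $(\varphi_k)_k\subset C^\infty_c(\Omega_c)$ by zero gives a Cauchy sequence in $C^\infty_c(\R^n)$ with identical gradient norms, so $j(u)\in D^{1,2}(\R^n)$ and the isometry are immediate (this is the ``it is clear'' step), and the pointwise gradient bound transfers to the extension; the Lipschitz continuity of $j(u)$ is then obtained abstractly, by noting that $|\nabla j(u)|\le 1$ a.e.\ together with Morrey-type embedding gives $j(u)\in W^{1,\infty}(\R^n)$, hence $j(u)\in C^{0,1}(\R^n)$. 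You instead work directly at the level of the function: you first show $u$ extends continuously to $\overline{\Omega_c}$ with zero boundary value (essentially re-deriving Lemma~\ref{Lext} combined with Proposition~\ref{tr0}--(\ref{d3})), then obtain the global $1$-Lipschitz bound by decomposing an arbitrary segment into its intersections with $\Omega_c$ and $\overline\Omega$, and finally identify the distributional gradient of $j(u)$ with $\chi_{\Omega_c}\nabla u$ by integration by parts. Your approach is more self-contained and makes the geometric mechanism explicit (why no mass of the gradient can concentrate on $\partial\Omega$), at the cost of length and of two points that deserve care: the a.e.\ bound $|\nabla u|\le 1$ controls the variation along \emph{every} segment in $\Omega_c$ only after a mollification or Fubini argument (the bound holds a.e.\ for the $n$-dimensional measure, not a priori on a fixed line), and the telescoping across the countably many subsegments uses that $j(u)$ vanishes at all their $\partial\Omega$-endpoints, which is exactly what your first step provides. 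The paper's argument buys brevity by never leaving the completion framework; note also that once $j(u)$ is known to be globally Lipschitz, Rademacher's theorem already identifies its distributional gradient, so your integration-by-parts step could be shortened.
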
  
\begin{proof}	
Let $u\in \mathcal X_0$; it is clear that $j(u)\in D^{1,2}(\R^n)$. Hence, $j(u)\in L^{2^*}(\R^n)$ and $|\nabla j(u)|\in L^2(\R^n)$; moreover, $|\nabla j(u)(x)|\leq 1$ for a.e. $x\in \R^n$. By Morrey's embedding theorem and we deduce that $j(u)\in L^{\infty}(\R^n)$, hence  $j(u)\in W^{1,\infty}(\R^n)$ and therefore it is Lipschitz. 
\end{proof}	

As a consequence of Lemma~\ref{isometry}, $\mathcal X_0$ can be identified with a subset  of $D^{1,2}(\R^n)\cap  \{u\in C^{0,1}(\R^n): \|\nabla u\|_{L^\infty(\R^n)}\leq 1\}$ and then by   
\cite[Lemma 2.1]{BoPo16}  we immediately get: 
\begin{lem}\label{prop}
As a  subset of the Hilbert space $D^{1,2}(\Omega_c)$, $\mathcal X_0$   satisfies the following properties:
	\begin{enumerate}[label=(\arabic{*}), ref=\arabic{*}]
		\item \label{p1}it is continuously embedded in $W^{1,p}(\Omega_c)$, for all $p\in [2^\ast, + \infty)$;
		\item\label{p2}
		it  is continuously embedded in $L^\infty(\Omega_c)$;
		\item\label{p3} all $u\in \mathcal X_0$ satisfy $\lim_{|x|\to \infty}u(x)= 0$;
		\item\label{p4}
		it is a convex and  weakly closed subset of $D^{1,2}(\Omega_c)$;
		\item\label{p5}
		any  bounded sequence $(u_k)_k\subset \mathcal X_0$ admits a subsequence weakly converging  to some $u\in \mathcal X_0$ and uniformly on compact subsets of $\Omega_c$.
	\end{enumerate}
\end{lem}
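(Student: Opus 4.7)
The plan is to transfer each of the five properties from the analogous set
$\mathcal Y_0 := D^{1,2}(\R^n) \cap \{u \in C^{0,1}(\R^n): \|\nabla u\|_{L^\infty(\R^n)} \leq 1\}$
to $\mathcal X_0$ via the isometry $j$ of Lemma~\ref{isometry}. Since $j$ is norm-preserving, sends convex combinations to convex combinations, and preserves pointwise values on $\Omega_c$ while vanishing elsewhere, strong and weak convergence, convexity, boundedness, and the pointwise bound on the gradient all correspond on the two sides. Thus it suffices to establish the analogues of (1)--(5) for $\mathcal Y_0$ as a subset of $D^{1,2}(\R^n)$, and then restrict.

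For (2) and (3), the heart of the matter is an interpolation estimate of the form $\|u\|_{L^\infty(\R^n)} \leq C \|\nabla u\|_{L^2(\R^n)}^{\theta}$ valid on $\mathcal Y_0$, for some $\theta \in (0,1)$. I would prove it by starting from a near-maximum point $x_0$ of $u$, using $|\nabla u| \leq 1$ to deduce $|u(x)| \geq \|u\|_\infty/2$ on the ball $B(x_0, \|u\|_\infty/2)$, and then combining with the Sobolev embedding $D^{1,2}(\R^n) \hookrightarrow L^{2^*}(\R^n)$ to bound the measure of this ball by a power of $\|\nabla u\|_{L^2}$. Vanishing at infinity in (3) follows by the same mechanism: if $|u(x_k)| \geq \delta > 0$ along a sequence $|x_k| \to \infty$, the $1$-Lipschitz property yields $|u| \geq \delta/2$ on disjoint balls at infinity, contradicting $u \in L^{2^*}(\R^n)$. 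Continuity (not mere boundedness) of the embedding $\mathcal X_0 \hookrightarrow L^\infty(\Omega_c)$ in (2) is then obtained by applying the estimate to $(j(u) - j(v))/2$, which again lies in $\mathcal Y_0$.

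Property (1) reduces to the pointwise inequality $|\nabla u|^p \leq |\nabla u|^2$ a.e.\ for $p \geq 2$, giving $\nabla u \in L^p(\Omega_c)$ with $\|\nabla u\|_p \leq \|\nabla u\|_2^{2/p}$, combined with the Lebesgue interpolation $L^p \subset L^{2^*} \cap L^\infty$ for $p \in [2^*, +\infty)$. For (4), convexity of $\mathcal X_0$ is immediate from convexity of the constraint $|\nabla u| \leq 1$ a.e., so by Mazur's theorem weak closedness follows from strong closedness; the latter holds because if $u_k \to u$ in $D^{1,2}(\Omega_c)$ with $u_k \in \mathcal X_0$, then along a subsequence $\nabla u_k \to \nabla u$ a.e.\ and the bound $|\nabla u| \leq 1$ passes to the limit, while the continuous representative of $u$ is automatically $1$-Lipschitz on $\Omega_c$. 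Finally (5) combines reflexivity of $D^{1,2}(\Omega_c)$ (giving weak subsequential compactness), the weak closedness established in (4), and the Arzel\`a--Ascoli theorem applied to the uniformly $1$-Lipschitz and (by (2)) uniformly bounded family $(u_k)_k$ on compact subsets of $\Omega_c$.

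The main obstacle I foresee is the Gagliardo--Nirenberg-type interpolation underlying (2) and (3): one must produce a quantitative estimate on the nonlinear set $\mathcal Y_0$ (where the usual linear interpolation arguments need to be adapted to exploit the hard pointwise constraint on $\nabla u$) and, from it, upgrade boundedness of $\mathcal X_0 \hookrightarrow L^\infty(\Omega_c)$ to continuity despite $\mathcal X_0$ not being a linear subspace, which is what forces the rescaling trick applied to differences.
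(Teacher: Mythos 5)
Your reduction is exactly the paper's: identify $\mathcal X_0$ with a subset of $D^{1,2}(\R^n)\cap\{u\in C^{0,1}(\R^n):\|\nabla u\|_{L^\infty(\R^n)}\leq 1\}$ via the isometry $j$ of Lemma~\ref{isometry} and transfer properties (1)--(5) from the whole-space set; the only difference is that the paper then simply cites \cite[Lemma 2.1]{BoPo16} for those properties, whereas you re-derive them, and your sketches (the Gagliardo--Nirenberg-type bound $\|u\|_\infty\leq C\|\nabla u\|_2^{2^*/(2^*+n)}$ from the $1$-Lipschitz constraint, the rescaled-difference trick for continuity, Mazur plus a.e.\ convergence of gradients for weak closedness, and reflexivity plus Ascoli--Arzel\`a for (5)) are sound and match the cited argument. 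Only trivial blemishes: the inclusion in (1) should read $L^{2^*}\cap L^\infty\subset L^p$, and in (5) one should add the one-line identification of the local uniform limit with the weak limit via convergence in the sense of distributions.
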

 
\bere\label{rem:ws}
Let us observe that if $u\in \mathcal{X}$  (cf. \eqref{storto}) is a weak solution according to Definition \ref{def:ws}, then the identity in \eqref{ws} also holds for any $v\in \mathcal{X}_0$ by means of a convolution argument and \cite[Lemma 2.10]{BoPo16}.
\ere
\begin{lem}\label{Lext}
Let $\Omega_c$ satisfy Assumption~\ref{omega}. Then, every $u\in C^{0,1}_{\mathrm{loc}}(\Omega_c)$ such that $|\nabla u|\in L^{\infty}(\Omega_c)$ can be extended to a  Lipschitz function on $\overline \Omega_c$. 
\end{lem}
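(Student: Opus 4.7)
The plan is to exhibit $\overline\Omega_c$ as a quasi-convex subset of $\R^n$: to produce a constant $C=C(\Omega)\geq 1$ such that any two points of $\overline\Omega_c$ can be joined by a rectifiable curve contained in $\overline\Omega_c$ of length at most $C|x-y|$. Once this is established, since $u\in C^{0,1}_{\mathrm{loc}}(\Omega_c)$ with $M:=\|\nabla u\|_{L^\infty(\Omega_c)}<+\infty$, integrating $\nabla u$ (which exists a.e.\ by Rademacher's theorem and is essentially bounded by $M$) along any such quasi-geodesic gives $|u(x)-u(y)|\leq CM|x-y|$ first for $x,y\in\Omega_c$ and then, by uniform continuity, for all $x,y\in\overline\Omega_c$. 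This produces the desired Lipschitz extension.

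To build the constant $C$, fix $R>0$ with $\overline\Omega\subset B(0,R)$. I would split $\overline\Omega_c$ into the exterior piece $E:=\R^n\setminus B(0,R)\subset\overline\Omega_c$ and the compact piece $K:=\overline\Omega_c\cap\overline{B(0,2R)}$. On $E$, any two points can be joined (since $n\geq 2$) by two radial segments and a great-circle arc on $\partial B(0,R)$ of total length at most $\pi|x-y|$, so quasi-convexity holds on $E$ with constant $\pi$. The compact set $K$ is the closure of the bounded Lipschitz domain $D:=B(0,2R)\setminus\overline\Omega$; covering its boundary by finitely many balls provided by Definition~\ref{lipdom}, in each of which $D$ is bi-Lipschitz equivalent to a Lipschitz epigraph (hence quasi-convex with constant $\sqrt{1+L^2}$, where $L$ is the Lipschitz norm of the local defining function), and stitching these patches to a finite collection of Euclidean balls covering the interior of $D$ via a Lebesgue-number argument on $K$, yields a uniform quasi-convexity constant on $K$. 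Since the annular overlap $K\cap E\subset\overline\Omega_c$ is non-empty, concatenating the two types of quasi-geodesics through any overlap point gives the global constant $C$.

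The main obstacle is the quasi-convexity of $K$. This is essentially the classical statement that a bounded Lipschitz domain is quasi-convex (sometimes packaged as the identification $W^{1,\infty}(D)=C^{0,1}(\overline D)$, see e.g.\ \cite[\S 1.1]{Mazya11}); the only extra bookkeeping needed here is that $\partial K$ consists of both the Lipschitz portion $\partial\Omega$ and the smooth sphere $\partial B(0,2R)$, but these two pieces are disjoint since $\overline\Omega\subset B(0,R)$, so no chart interference occurs and the quasi-convexity constants combine cleanly.
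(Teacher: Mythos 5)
Your argument is correct in substance but takes a genuinely different route from the paper's. The paper first proves that $u$ is \emph{bounded} on $V_R=B(0,R)\setminus\overline\Omega$ by using Maz'ya's decomposition of a bounded Lipschitz domain into finitely many subdomains star-shaped with respect to balls (the segment from a star center to any point of the subdomain stays inside it and yields $|u(x)|\le M+2R\|\nabla u\|_\infty$); this puts $u_{|V_R}$ in $W^{1,\infty}(V_R)$, so the Sobolev extension theorem for Lipschitz domains produces $\tilde u\in W^{1,\infty}(\R^n)$, which is then glued to $u$ outside $B(0,R)$ and recognized as Lipschitz because $W^{1,\infty}(\R^n)=C^{0,1}(\R^n)$. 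You instead establish quasi-convexity of $\Omega_c$ and integrate the gradient bound along quasi-geodesics. This bypasses both the boundedness step (the Lipschitz estimate gives boundedness on bounded sets for free) and the extension theorem, at the cost of proving quasi-convexity of a bounded Lipschitz domain by chart-stitching --- a classical fact, but one whose full proof has roughly the same depth as the star-shaped decomposition the paper cites (which, incidentally, also yields quasi-convexity directly by chaining through the star centers). Both routes are legitimate.

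Two small repairs. First, the specific exterior path you describe (two full radial segments plus a great-circle arc) does \emph{not} have length at most $\pi|x-y|$: for $|x|=|y|=\rho$ large and angle $\theta\to 0$ the ratio $\bigl(2(\rho-R)+R\theta\bigr)/\bigl(2\rho\sin(\theta/2)\bigr)$ blows up. Use instead the straight segment whenever it avoids $B(0,R)$ and, when it does not, replace only the portion of the chord inside the ball by the corresponding arc on $\partial B(0,R)$; this gives the standard constant $\pi/2$. Second, ``integrating $\nabla u$ along the curve'' needs a word of justification, since $\nabla u$ is only defined a.e.\ and the curve is a null set: the correct statement is that on every ball $B\subset\Omega_c$ a $W^{1,\infty}$ function is Lipschitz with constant $\operatorname{ess\,sup}_B|\nabla u|\le M$ (by convexity of $B$), so $u\circ\gamma$ is $M$-Lipschitz with respect to arclength and $|u(x)-u(y)|\le M\,\mathrm{length}(\gamma)$. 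Neither point affects the validity of the approach.
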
	
\begin{proof}
Let us take $R>0$ such that $\overline\Omega\subset B(0,R)$ and consider the bounded open set $V_R:=B(0,R)\setminus  \overline \Omega$. Let us  show that $u$ is bounded on $V_R$. Clearly $V_R$ is a Lipschitz domain and then it is the union of a finite number of Lipschitz domains  $V_j$ starshaped with respect to  balls $B(y_j, R_j)$ contained in $V_R$ (see \cite[Lemma 1, p. 15]{Mazya11}). Now assume by contradiction that $u_{|V_R}$ is not bounded. Then there exists a sequence  $(x_k)_k\subset V_R$ such that $|u(x_k)|\to +\infty$. Let $j_k$ be one of the indices $j$ such that $x_k\in V_{j_k}$. Then the segment $\overline{y_{j_k}x_{k}}$ is contained in $V_{j_k}$. Hence,
\[
|u(x_k)|\leq |u(y_{j_k})|
+\|\nabla u\|_\infty|x_{k}-y_{j_k}|\leq M+2R\|\nabla u\|_\infty,
\]
where $M:=\max_{j}|u(y_{j})|$, a contradiction. 

Thus, $u_{|V_R}\in W^{1,\infty}(V_R)$ and, since $V_R$ has the  extendibility property  (see \cite[Th. 13.17]{Leoni17}\footnote{We point out that the theorem can be applied since $	\partial\Omega=\partial\Omega_c$ is bounded, hence by \cite[p. 424]{Leoni17} being Lipschitz is equivalent to be uniformly Lipschitz.}), it admits an extension $\tilde u\in W^{1,\infty}(\R^n)$. Then 
\begin{equation}\label{exte}
\bar u(x):=\begin{cases} \tilde u(x)&\text{if $x\in B(0,R)$}\\
u(x)&\text{otherwise}\end{cases}
\end{equation}
is an extension of $u$ to $\R^n$ such that $|\nabla \bar u|\in L^{\infty}(\R^n)$; then it is Lipschitz on $\R^n$ (see, e.g. \cite[Ex. 11.50-(i)]{Leoni17})  and $\bar u_{|\overline \Omega_c}$ is a  Lipschitz extension of $u$.
\end{proof}

Now we are ready to show
that $\mathcal X$ is included in $W^{1,p}(\Omega_c)$, for all $p\in [2^*,+\infty]$.
\begin{lem}\label{sprimo}
Let  $u\in\mathcal X$; then $u\in W^{1,p}(\Omega_c)$, for all $p\in [2^*,+\infty]$ and moreover $\displaystyle\lim_{|x|\to\infty}u(x)=0$.
\end{lem}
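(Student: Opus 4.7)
The plan is to decompose $u$ into a compactly supported piece (handled directly by Lemma~\ref{Lext}) and a piece lying in $D^{1,2}(\Omega_c)$ (handled by Lemma~\ref{prop}), and then conclude by interpolation.

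First I would choose a cutoff as in the proof of Proposition~\ref{tr0}\eqref{d3}: fix $R>0$ large enough that $\overline{\Omega}\subset B(0,R)$ and pick a smooth $v\colon\R^n\to[0,1]$ with $v\equiv 1$ on $B(0,R)$, $\mathrm{supp}(v)\subset B(0,2R)$, and $|\nabla v|\leq 1/R$ on the annulus $A_R:=B(0,2R)\setminus B(0,R)$. Since $u\in C^{0,1}_{\mathrm{loc}}(\Omega_c)$ and $|\nabla u|\leq 1$ a.e., Lemma~\ref{Lext} ensures that $u$ is bounded on the bounded Lipschitz set $V_{2R}:=B(0,2R)\setminus\overline{\Omega}$, so $vu$ is Lipschitz with compact support, in particular bounded on $\Omega_c$.

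Next I would analyze the remainder $w:=(1-v)u$. Since $v\equiv 1$ on a neighborhood of $\overline{\Omega}$, $w$ vanishes identically there, so extending $w$ by $0$ to $\Omega$ yields a locally Lipschitz function on $\R^n$ whose trace on $\partial\Omega$ is $0$. Moreover $w\in L^{2^*}(\Omega_c)$ (because $|1-v|\leq 1$ and $u\in L^{2^*}$), and
\[
|\nabla w|\leq |\nabla u|+|u|\,|\nabla v|
\]
with the second term supported in $A_R\subset V_{2R}$ where $u$ is bounded, so $|\nabla w|\in L^2(\Omega_c)$ and in fact $\|\nabla w\|_\infty=:C$ is finite. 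By Proposition~\ref{trace}, $w\in D^{1,2}(\Omega_c)$, and the rescaled function $w/\max(1,C)$ belongs to $\mathcal{X}_0$; by Lemma~\ref{prop}\eqref{p2}-\eqref{p3}, $w\in L^\infty(\Omega_c)$ and $\lim_{|x|\to\infty}w(x)=0$.

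Combining the two pieces, $u=vu+w$ is bounded on $\Omega_c$, and since $v$ vanishes outside $B(0,2R)$, for $|x|>2R$ one has $u(x)=w(x)\to 0$, which gives the asymptotic vanishing. Finally, $u\in L^\infty(\Omega_c)\cap L^{2^*}(\Omega_c)$ yields $u\in L^p(\Omega_c)$ for every $p\in[2^*,+\infty]$ by the standard interpolation inequality $\|u\|_p\leq \|u\|_{2^*}^{2^*/p}\|u\|_\infty^{1-2^*/p}$, while $|\nabla u|\leq 1$ together with $|\nabla u|\in L^2(\Omega_c)$ gives
\[
\int_{\Omega_c}|\nabla u|^p\,\de x\ \leq\ \int_{\Omega_c}|\nabla u|^2\,\de x\ <\ +\infty
\]
for every $p\in[2,+\infty]$, and hence $u\in W^{1,p}(\Omega_c)$ for all $p\in[2^*,+\infty]$. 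The only delicate point is the verification that the remainder $w$ actually lies in $D^{1,2}(\Omega_c)$: all the other steps are bookkeeping, but here one must invoke the characterization in Proposition~\ref{trace} together with the $C^0$-identification of the trace from Remark~\ref{mainrmk} to see that $\mathrm{Tr}(w)=0$ on $\partial\Omega$.
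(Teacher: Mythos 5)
Your argument is correct, and it ultimately rests on the same key tool as the paper's proof---the embedding properties of Lipschitz functions in the homogeneous Sobolev space, i.e.\ Lemma~\ref{prop} / \cite[Lemma 2.1]{BoPo16}---but it gets there by a different route. The paper extends $u$ \emph{inward} across $\Omega$: invoking Lemma~\ref{Lext} it produces a global Lipschitz extension $\bar u$ of $u$ to $\R^n$, notes that $\bar u\in D^{1,2}(\R^n)\cap C^{0,1}(\R^n)$ because $\Omega$ is bounded, and then reads off $\bar u\in W^{1,p}(\R^n)$ for all $p\in[2^*,+\infty]$ and $\lim_{|x|\to\infty}\bar u(x)=0$ from the $\R^n$-version of the embedding lemma, these properties being inherited by $u$. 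You instead cut off near $\Omega$, split $u=vu+(1-v)u$, place the far piece (after the rescaling by $\max(1,C)$, which is genuinely needed since $|\nabla((1-v)u)|$ can exceed $1$ on the annulus) into $\mathcal X_0$ via the characterization in Proposition~\ref{trace}, and handle the compactly supported Lipschitz piece by hand, finishing with elementary interpolation. Both proofs use Lemma~\ref{Lext} to control $u$ near $\partial\Omega$, and both in fact require normalizing the gradient bound before applying the embedding lemma (a point the paper leaves implicit, since the $W^{1,\infty}$ extension need not preserve $\|\nabla u\|_\infty\le 1$, whereas you make it explicit). The paper's version is shorter; yours stays entirely inside $\Omega_c$ and uses Lemma~\ref{prop} only in the form in which it is stated, at the cost of the extra bookkeeping with the cutoff.
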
	
\begin{proof}
 As in the proof of Lemma~\ref{Lext},  $u$ admits a Lipschitz  extension $\bar u$ to $\R^n$. Since $\Omega$ is  bounded,  we get that 
\[\bar u \in \ \W(\R^n)\cap L^{2^*}(\R^n)\cap C^{0,1}(\R^n)=D^{1,2}(\R^n)\cap  C^{0,1}(\R^n). 
\]
 we deduce that $\bar u\in W^{1,p}(\R^n)$ for all $p\in [2^*,+\infty]$ and $\lim_{|x|\to\infty}\bar u(x)=0$;  plainly, analogous properties hold for~$u$.   
\end{proof}

Finally, let $\varphi\in L^{2}(\partial \Omega)$ be such that there exists $w\in\mathcal X$ with $\mathrm{Tr}(w)=\varphi$. By Lemma~\ref{Lext} we  have that $\varphi=w_{|\partial\Omega}$ (in particular $\varphi$ must be Lipschitz on $\partial\Omega$). 
Let us set
\[\X :=\{u\in\mathcal X:\mathrm{Tr}(u)=\varphi\}.\] 
Then by Proposition~\ref{trace} for every $w\in \X$ we get
\[\X=\{w\}+\mathcal X_0.\]

\begin{prop}\label{weakly}
 $\X$ is convex and weakly closed as   a topological subset  of  the semi-normed vector space $\W(\Omega_c)$.  Moreover,   if $(u_k)_k$ is a bounded sequence in $\X$, then up to a subsequence it weakly converges to some $u\in    \X $ and uniformly converges on compact subsets of $\Omega_c$.
\end{prop}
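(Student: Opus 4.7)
The entire argument should reduce to the linear statements about $\mathcal{X}_0$ already recorded in Lemma~\ref{prop}, via the affine decomposition $\X = \{w\} + \mathcal{X}_0$ just established. Since translation by $w$ is an affine bijection between $\X$ and $\mathcal{X}_0$ and preserves the semi-norm $\|\nabla\cdot\|_2$ (because $\nabla(u-w)=\nabla u - \nabla w$ for every $u\in\X$), it should transport convexity, weak closedness, and sequential weak compactness directly. Convexity of $\X$ is then immediate: given $u_1,u_2\in\X$, write $u_i=w+v_i$ with $v_i\in\mathcal{X}_0$; for $t\in[0,1]$ one has $tu_1+(1-t)u_2=w+(tv_1+(1-t)v_2)$, and $tv_1+(1-t)v_2\in\mathcal{X}_0$ by Lemma~\ref{prop}(\ref{p4}).

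For weak closedness, I would take a sequence $(u_k)_k\subset\X$ weakly convergent to some $u$ in $\W(\Omega_c)$. Since every continuous linear functional on the semi-normed space $\W(\Omega_c)$ is bounded by a multiple of $\|\nabla\cdot\|_2$, it vanishes on constants and factors through the gradient map into $L^2(\Omega_c,\R^n)$; consequently, weak convergence $u_k\rightharpoonup u$ in $\W(\Omega_c)$ is equivalent to $\nabla u_k \rightharpoonup \nabla u$ in $L^2(\Omega_c,\R^n)$. Setting $v_k:=u_k-w\in\mathcal{X}_0\subset D^{1,2}(\Omega_c)$, one gets $\nabla v_k = \nabla u_k - \nabla w \rightharpoonup \nabla u - \nabla w$ in $L^2$, which, since $\|\nabla\cdot\|_2$ is the Hilbert-space norm on $D^{1,2}(\Omega_c)$, is precisely $v_k\rightharpoonup u-w$ in $D^{1,2}(\Omega_c)$. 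Lemma~\ref{prop}(\ref{p4}) forces $u-w\in\mathcal{X}_0$, hence $u\in\X$.

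For the compactness assertion, given a sequence $(u_k)_k\subset\X$ bounded with respect to $\|\nabla\cdot\|_2$, the sequence $v_k:=u_k-w\in\mathcal{X}_0$ is bounded in $D^{1,2}(\Omega_c)$. Lemma~\ref{prop}(\ref{p5}) then produces a subsequence $v_{k_j}$ that converges weakly in $D^{1,2}(\Omega_c)$ to some $v\in\mathcal{X}_0$ and uniformly on compact subsets of $\Omega_c$. Setting $u:=w+v\in\X$, the same subsequence $u_{k_j}=w+v_{k_j}$ converges weakly to $u$ in $\W(\Omega_c)$ (by the gradient characterization above) and uniformly to $u$ on compact subsets of $\Omega_c$.

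The only subtle point I expect is the characterization of the weak topology of the semi-normed space $\W(\Omega_c)$ in terms of weak $L^2$-convergence of gradients; this must be made explicit so that the translation by $w$, which lies in $\W(\Omega_c)$ but not in $D^{1,2}(\Omega_c)$, transfers weak convergence back and forth between $\X$ and $\mathcal{X}_0$. Once that identification is spelled out, every remaining step is a direct transcription of the properties collected in Lemma~\ref{prop}.
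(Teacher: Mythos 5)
Your proof is correct and follows essentially the same route as the paper: translation by $w$ reduces every claim to the properties of $\mathcal{X}_0$ as a subset of $D^{1,2}(\Omega_c)$ collected in Lemma~\ref{prop}. The only cosmetic differences are that for the uniform convergence on compact sets the paper invokes Lemma~\ref{prop}--\eqref{p2} together with the Ascoli--Arzel\`a theorem instead of citing Lemma~\ref{prop}--\eqref{p5} directly, and that your explicit identification of weak convergence in $\W(\Omega_c)$ with weak $L^2$-convergence of gradients spells out a point the paper leaves implicit.
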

\begin{proof}
 Fix any $w\in \X$;  since  $\X$ is the translation by $w$ of a convex and weakly closed  subset of $D^{1,2}(\Omega_c)\hookrightarrow\ \W(\Omega_c)$ and the trace operator is linear, we get that $\X$ is convex and weakly closed in $\W(\Omega_c)$. 

If $(u_k)_k$ is a bounded sequence in $\mathcal X$, then $(u_k-w)_k$ is bounded in $D^{1,2}(\Omega_c)$, hence there exists a weakly  converging subsequence to a function $v\in D^{1,2}(\Omega_c)$. This implies that $u_k\rightharpoonup w+v\in \X$. Moreover,  by Lemma~\ref{prop}--\eqref{p2},  $(u_k-w)_k$ is bounded in $L^\infty(\Omega_c)$ and then, by Ascoli-Arzel\`a theorem,  for any given compact set $K\subset \Omega_c$  it admits a  subsequence uniformly converging to $v$ on $K$.  
\end{proof}

\section{Proofs of the main results}\label{s3}
Let $\varphi\in L^{2}(\partial \Omega)$ be such that there exists $w\in\mathcal X$ with $\mathrm{Tr}(w)=\varphi$ and consider the functional 
\beq\label{I}
{\mathcal{I}}(u)=
\int_{\Omega_c}\left(1-\sqrt{1-|\nabla u|^2}\right) {\rm d}x+ 
\int_{\Omega_c}G(x,u)\,{\rm d}x
\eeq
with $G(x,t):=n\int_0^tH(x,s)\;{\rm d} s$. Recalling that $\frac{1}{2}t\leq 1-\sqrt{1-t}\leq t$, for all  $t\in [0,1]$,     by  assumption \eqref{h} $\mathcal I$ is well-defined on  $\X$ (see also the proof of the first part of  Lemma \ref{deb}).
Notice also that every spacelike critical point $u$ of $\mathcal I$ weakly satisfies (in the sense of Definition \ref{def:ws}) the first equation in \eqref{star}. 
Thus, a  spacelike weak  solution of \eqref{star} can be found if  $\mathcal I$ has a spacelike minimizer on $\X$. In order to prove this last statement we need the following lemmas. 
Let us set
$$
\mathcal I_0(u):=\int_{\Omega_c}\left(1-\sqrt{1-|\nabla u|^2}\right){\rm d}x\quad\text{and}\quad \mathcal{G}(u):=\int_{\Omega_c}G(x,u)\, {\rm d}x. 
$$

\begin{lem}\label{deb}
Under the assumption on $\Omega_c$ and $H$ in Theorem~\ref{maint}, assume also that $\varphi=\mathrm{Tr}(w)$ for some $w\in\mathcal X$. Then  
 $\mathcal{G}$ is well-defined on $\X$ and  sequentially  weakly continuous.
\end{lem}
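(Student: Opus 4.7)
My plan is to treat the two assertions separately, reducing both to the pointwise H\"older-type bound $|G(x,t)|\leq h(x)|t|$ supplied by assumption~\eqref{h}, and combining it with the $L^\infty\cap L^{2^*}$ control on elements of $\X$ provided by Lemma~\ref{sprimo}.

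For well-definedness, I would fix $u\in\X$ and let $s'\in[2^*,+\infty]$ denote the H\"older conjugate of $s$; the constraint $s\leq \frac{2n}{n+2}$ is precisely what forces $s'\geq 2^*$. Since $u$ lies in $L^\infty(\Omega_c)\cap L^{2^*}(\Omega_c)$, interpolation yields $u\in L^{s'}(\Omega_c)$, and a single application of H\"older against $h\in L^s(\Omega_c)$ gives
\[
\int_{\Omega_c}|G(x,u)|\,\de x\leq \int_{\Omega_c}h(x)|u(x)|\,\de x\leq \|h\|_s\|u\|_{s'}<+\infty.
\]

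For sequential weak continuity, let $u_k\rightharpoonup u$ in $\W(\Omega_c)$. A standard Urysohn-type subsequence argument applied to Proposition~\ref{weakly} upgrades this into uniform convergence of the full sequence on every compact subset of $\Omega_c$, hence pointwise convergence on $\Omega_c$. Moreover, Lemma~\ref{prop}--\eqref{p2} together with interpolation gives a uniform bound $\|u_k\|_{s'}\leq M$, and likewise $\|u\|_{s'}\leq M$. Using $|G(x,u_k)-G(x,u)|\leq h(x)|u_k-u|$, the task reduces to proving $\int_{\Omega_c}h|u_k-u|\,\de x\to 0$, which I would establish by Vitali's convergence theorem: pointwise a.e.\ vanishing is clear, while the twin conditions of uniform absolute continuity and tightness both follow from the H\"older estimate
\[
\int_E h|u_k-u|\,\de x\leq 2M\|h\|_{L^s(E)},
\]
combined respectively with absolute continuity of the $L^s$-integral of $h$ (for $|E|\to 0$) and with the choice $E=\Omega_c\setminus B(0,R)$ as $R\to+\infty$.

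The principal obstacle I anticipate is that $h$ need not lie in $L^1(\Omega_c)$, so a direct dominated convergence argument is unavailable and one must work with Vitali's theorem. The ingredient that rescues the situation is precisely the $L^\infty$-bound on $\X$ from Lemma~\ref{prop}, which combined with membership in $L^{2^*}$ produces the $L^{s'}$-interpolation needed to absorb $h$ via H\"older and convert the weak $\W$-convergence of $u_k$ into $L^1$-convergence of $hu_k$ on the tail regions.
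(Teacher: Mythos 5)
Your argument is correct, and it rests on the same three pillars as the paper's proof: the H\"older pairing of $h\in L^s(\Omega_c)$ against $u\in L^{s'}(\Omega_c)$ with $s'\ge 2^*$ (via Lemma~\ref{sprimo} / Lemma~\ref{prop}), the locally uniform convergence supplied by Proposition~\ref{weakly} upgraded to the full sequence by the sub-subsequence principle, and a tail-cutting estimate outside a large ball. The packaging differs: the paper splits $\Omega_c$ into $\R^n\setminus\overline{B(0,R)}$, where $\|h\|_{L^s}$ is small and $\|u_k\|_{s'}$ is uniformly bounded, and the bounded set $B(0,R)\setminus\overline\Omega$, where $h\in L^1$ and dominated convergence applies to $G(x,u_k)$ and $G(x,u)$ separately; you instead exploit the Lipschitz-in-$t$ bound $|G(x,u_k)-G(x,u)|\le h(x)|u_k-u|$ and run Vitali's theorem on the single sequence $h|u_k-u|$, with uniform integrability and tightness both falling out of one H\"older estimate. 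Your route is marginally cleaner in that it avoids estimating $u_k$ and $u$ separately on the tail, while the paper's is more elementary in relying only on dominated convergence. The only points you should make explicit in a final write-up are (i) that the uniform bound $\|u_k\|_{s'}\le M$ comes from the boundedness in $D^{1,2}(\Omega_c)$ of the weakly convergent sequence $(u_k-w)_k$ (uniform boundedness principle) fed into the embeddings of Lemma~\ref{prop}, not from membership in $\mathcal X_0$ alone, and (ii) that the locally uniform limit coincides with the weak limit $u$ because both imply convergence in $\mathscr D'(\Omega_c)$; neither is a gap, just a line each.
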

\begin{proof}
Let us  denote  by $s'$ the conjugate exponent of $s$.   As  $u\in \X\subset \mathcal X$,  by Lemma~\ref{sprimo} we get that $u\in W^{1,p}(\Omega_c)$ for all $p\in [2^*,+\infty]$.
Then for all  $s\in [1,\frac{2n}{n+2}]$,  $s'\geq 2^\ast$ and
$$
|\mathcal{G}(u)|\leq \|h\|_s\|u\|_{s'}<+\infty.
$$
Let us now show that $\mathcal G$ is sequentially weakly continuos on $\X$. Assume that  $(u_k)_k\subset \X$ weakly converges to $u\in \ \W(\Omega_c)$. By Proposition~\ref{weakly}, $u\in\X$;
moreover $(u_k-w)_k\subset \mathcal X_0$ and then it is bounded in $D^{1,2}(\Omega_c)$.
Since $h\in L^s(\Omega_c)$, for a given $\epsilon>0$ there exists $R>0$ such that $\overline\Omega\subset B(0,R)$ and
$$
\|h\|_{L^{s}(\R^n\setminus \overline{B(0,R)})}<\epsilon;
$$
therefore by  Lemma \ref{prop}--\eqref{p1}, there exists $C_1>0$ such that 
\begin{align}
\int_{\R^n\setminus \overline{B(0,R)}}|G(x,u_k)|\;{\rm d}x
&\leq \|h\|_{L^{s}(\R^n\setminus \overline{B(0,R)})}\|u_k\|_{s'}
\nonumber
\\
&\leq
\|h\|_{L^{s}(\R^n\setminus \overline{B(0,R)})}\left(\|u_k-w\|_{s'} + \|w\|_{s'}\right)
\leq C_1 \epsilon \label{pippo}
\end{align}
and
\begin{equation}\label{pippa}
\int_{\R^n\setminus \overline{B(0,R)}}|G(x,u)|\;{\rm d}x
\leq C_1 \epsilon.
\end{equation}
Notice that, again by assumption \eqref{h}, taking the bounded subset $B(0,R)\setminus \overline\Omega$, it results in particular $h\in L^1(B(0,R)\setminus\overline\Omega)$ and then 
by  Lemma \ref{prop}--\eqref{p2}, for a $C_2>0$ we get
$$
|G(x,u_k)|\leq h(x)\|u_k\|_{\infty} \leq h(x)(\|u_k-w\|_{\infty}+\|w\|_{\infty})\leq C_2 h(x),
$$ 
for all  $k\in\N$,  a.e.  in $B(0,R)\setminus\overline\Omega$. 
By Proposition~\ref{weakly} the bounded sequence $(u_k)_k$ uniformly converges on compact subsets in $\Omega_c$, up to a subsequence. Hence, $(G(x,u_k))_k$ converges to $G(x,u)$ a.e. in $B(0,R)\setminus\overline\Omega$ and  by the Lebesgue's dominated convergence theorem
it follows that
$$
\int_{B(0,R)\setminus\overline\Omega}G(x,u_k)\;{\rm d}x \longrightarrow
\int_{B(0,R)\setminus\overline\Omega}G(x,u)\;{\rm d}x, \quad\hbox{as } k\longrightarrow +\infty.
$$
As the last  convergence actually holds for the whole sequence, being $\epsilon$ arbitrary, by \eqref{pippo} and \eqref{pippa}, we are done. 
\end{proof}
Let us now show that $\mathcal I$ has a global minimum point on $\X$.

\begin{prop}\label{pr:min}
Under the assumption on $\Omega$ and $H$ in Theorem~\ref{maint}, the functional $\mathcal{I}$ possesses at least a minimizer in $\mathcal{X}_\varphi$.
\end{prop}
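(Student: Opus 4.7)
The plan is to carry out the direct method of the calculus of variations on $\mathcal{I}$, once coercivity and sequential weak lower semicontinuity along minimizing sequences are established.

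First, I would verify that $\mathcal{I}$ is bounded below and coercive in the $\|\nabla\cdot\|_2$ seminorm. The pointwise inequality $\tfrac{1}{2}t\leq 1-\sqrt{1-t}$ on $[0,1]$, applied with $t=|\nabla u|^2$, yields $\mathcal{I}_0(u)\geq \tfrac{1}{2}\|\nabla u\|_2^2$. For the potential term, write $u=w+v$ with $v:=u-w\in\mathcal X_0$. From assumption \eqref{h} one has $|G(x,t)|\leq h(x)|t|$, so by H\"older with exponents $(s,s')$ and the embeddings of Lemma~\ref{prop}--\eqref{p1}--\eqref{p2} (valid for every $s'\in[2^*,+\infty]$), $\|v\|_{s'}\leq C\|\nabla v\|_2$. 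Combining, $|\mathcal{G}(u)|\leq \|h\|_s(\|v\|_{s'}+\|w\|_{s'})\leq C_1\|\nabla u\|_2+C_2$, and therefore
\[
\mathcal{I}(u)\ \geq\ \tfrac{1}{2}\|\nabla u\|_2^2-C_1\|\nabla u\|_2-C_2.
\]
This is bounded below and diverges as $\|\nabla u\|_2\to+\infty$.

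Next, pick a minimizing sequence $(u_k)_k\subset\mathcal{X}_\varphi$. Coercivity forces $(\|\nabla u_k\|_2)_k$ to be bounded, so $(u_k)_k$ is bounded in the seminormed space $\W(\Omega_c)$. Proposition~\ref{weakly} then yields a subsequence, still denoted $(u_k)_k$, weakly converging in $\W(\Omega_c)$ to some $u\in\mathcal{X}_\varphi$, and uniformly on compact subsets of $\Omega_c$. The sequential weak continuity of $\mathcal{G}$ along $(u_k)_k$ is exactly Lemma~\ref{deb}, so there only remains to prove the sequential weak lower semicontinuity of $\mathcal{I}_0$.

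The integrand $F(\xi)=1-\sqrt{1-|\xi|^2}$ is smooth and strictly convex on the open unit ball of $\R^n$, continuous on the closed ball, hence $\mathcal{I}_0$ is convex on the convex set $\mathcal{X}_\varphi$. The main obstacle is that the naive subgradient inequality involves $\nabla u/\sqrt{1-|\nabla u|^2}$, which need not be integrable if $|\nabla u|=1$ on a set of positive measure in the weak limit. I would bypass this by a monotone approximation: for each integer $M\geq 1$, let
\[
F_M(\xi):=1-\sqrt{1-(1-1/M)|\xi|^2},
\]
which is convex on $\R^n$ with uniformly bounded gradient, so $u\mapsto \int_{\Omega_c}F_M(\nabla u)\,\de x$ is convex and continuous on $\W(\Omega_c)$, hence sequentially weakly lower semicontinuous. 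Since $F_M\uparrow F$ pointwise on the closed unit ball, $\mathcal{I}_0$ is the pointwise supremum of these weakly lower semicontinuous functionals on $\mathcal{X}_\varphi$, and is therefore itself weakly lower semicontinuous. Combining with Lemma~\ref{deb}, $\mathcal{I}(u)\leq \liminf_k \mathcal{I}(u_k)=\inf_{\mathcal{X}_\varphi}\mathcal{I}$, so $u$ is the desired minimizer in $\mathcal{X}_\varphi$.
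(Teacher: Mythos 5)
Your proof is correct and follows essentially the same route as the paper: the identical coercivity estimate, compactness and closedness of $\mathcal{X}_\varphi$ via Proposition~\ref{weakly}, sequential weak continuity of $\mathcal{G}$ from Lemma~\ref{deb}, and sequential weak lower semicontinuity of $\mathcal{I}_0$ from convexity. The only variation is that where the paper cites \cite[Lemma 2.2]{BoPo16} for the convexity and strong continuity of $\mathcal{I}_0$ itself, you give a self-contained monotone-approximation argument; just note that $F_M$ is defined and convex only on the ball of radius $\sqrt{M/(M-1)}$ rather than on all of $\R^n$, which suffices here since $|\nabla u|\le 1$ a.e.\ for every $u\in\mathcal{X}_\varphi$.
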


\begin{proof}
By assumption \eqref{h},  Lemma~\ref{sprimo} and \eqref{p1}-\eqref{p2} of Lemma~\ref{prop}, denoting by $s'$ the conjugate exponent of $s$, for all $u\in\X$ we have:
\baln
\mathcal{I}(u)&\geq \frac{1}{2}\|\nabla u\|^2_2-\|h\|_s\|u\|_{s'}\\
&\geq\frac{1}{2}\|\nabla u\|^2_2-\|h\|_s\|u-w\|_{s'}-\|h\|_s\|w\|_{s'}\\ 
& \geq \frac{1}{2}\|\nabla u\|^2_2-C_1\|h\|_s\|\nabla u-\nabla w\|_{2}-\|h\|_s\|w\|_{s'}\\ 
&\geq \frac{1}{2}\|\nabla u\|^2_2-C_1\|h\|_s\|\nabla u\|_2-C_1\|h\|_s\|\nabla w\|_{2}-\|h\|_s\|w\|_{s'}
\ealn
hence $\mathcal{I}$ is coercive.  By Proposition~\ref{weakly}, in order to get the existence of a minimizer, we just need to prove that $\mathcal{I}$ is sequentially weakly lower semi-continuous. Actually this holds because the first term $\mathcal{I}_0$ of $\mathcal I$ is convex and strongly continuous (cf.  \cite[Lemma 2.2]{BoPo16} for details), thus it is  sequentially  weakly lower semi-continuous in the semi--normed space $\W(\Omega_c) $
 and  $\mathcal{G}$ is  sequentially  weakly continuous by  Lemma~\ref{deb}.
\end{proof}
The lemma below is based on \cite{BM} and shows that a minimizer $u$ of $\mathcal I$ is a  minimizer also for the analogous  functional  corresponding to  $\mathcal I$ with  $H^*(x)=nH(x,u(x))$ replacing $nH$. 
\begin{lem}\label{min}
	Under the assumptions of Lemma~\ref{deb}, assume also that $u$ is a minimizer of $\mathcal I$ on $\X$ and set $H^*(x):=nH(x,u(x))$. Then $u$ is  also  a minimizer   of the functional $\mathcal I^*\colon \X\to \R$,  where  
	\[\mathcal I^*(v):=\int_{\Omega_c}\left(1-\sqrt{1-|\nabla v|^2}\right){\rm d}x+\int_{\Omega_c}H^*(x)v(x)\,\de x.\]  
\end{lem}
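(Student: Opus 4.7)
The plan is to exploit the convexity of $\mathcal{X}_\varphi$ together with the convexity of the "kinetic" part $\mathcal{I}_0$ and then differentiate in a direction pointing from $u$ to an arbitrary competitor $v\in\mathcal{X}_\varphi$. Fix such a $v$ and, for $t\in[0,1]$, set $u_t:=(1-t)u+tv=u+t(v-u)$. By Proposition~\ref{weakly}, $u_t\in\mathcal{X}_\varphi$, so the minimality of $u$ for $\mathcal{I}$ gives $\mathcal{I}(u_t)-\mathcal{I}(u)\geq 0$, i.e.
\[
\mathcal{I}_0(u_t)-\mathcal{I}_0(u)+\mathcal{G}(u_t)-\mathcal{G}(u)\geq 0.
\]
The convexity of the integrand $s\mapsto 1-\sqrt{1-s^2}$ in $s\in[-1,1]$ (used already in the proof of Proposition~\ref{pr:min}) yields $\mathcal{I}_0(u_t)\leq (1-t)\mathcal{I}_0(u)+t\mathcal{I}_0(v)$, whence $\mathcal{I}_0(u_t)-\mathcal{I}_0(u)\leq t\bigl(\mathcal{I}_0(v)-\mathcal{I}_0(u)\bigr)$. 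Combining,
\[
\frac{\mathcal{G}(u_t)-\mathcal{G}(u)}{t}\geq -\bigl(\mathcal{I}_0(v)-\mathcal{I}_0(u)\bigr), \qquad t\in(0,1].
\]

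The next step is to pass to the limit as $t\to 0^+$ in the left-hand side and identify it with $\int_{\Omega_c} H^*(x)(v-u)\,\de x$. For a.e. $x$, $G(x,\cdot)$ is absolutely continuous with $\partial_t G(x,t)=nH(x,t)$, so by the fundamental theorem of calculus
\[
\frac{G(x,u_t(x))-G(x,u(x))}{t}\,\longrightarrow\, nH(x,u(x))(v(x)-u(x))=H^*(x)(v(x)-u(x))
\]
pointwise a.e. in $\Omega_c$. For the domination, using $n|H|\leq h$,
\[
\left|\frac{G(x,u_t(x))-G(x,u(x))}{t}\right|\leq h(x)|v(x)-u(x)|,
\]
and the right-hand side is integrable: by Lemma~\ref{sprimo} both $u$ and $v$ belong to $W^{1,p}(\Omega_c)$ for every $p\in[2^*,+\infty]$, and $s'\geq 2^*$, so H\"older's inequality gives $\|h|v-u|\|_1\leq \|h\|_s\|v-u\|_{s'}<+\infty$. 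Dominated convergence then yields
\[
\lim_{t\to 0^+}\frac{\mathcal{G}(u_t)-\mathcal{G}(u)}{t}=\int_{\Omega_c}H^*(x)(v(x)-u(x))\,\de x.
\]

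Plugging this into the inequality above gives
\[
\int_{\Omega_c}H^*(x)(v-u)\,\de x\geq \mathcal{I}_0(u)-\mathcal{I}_0(v),
\]
i.e. $\mathcal{I}_0(u)+\int_{\Omega_c}H^*(x)u\,\de x\leq \mathcal{I}_0(v)+\int_{\Omega_c}H^*(x)v\,\de x$, which is exactly $\mathcal{I}^*(u)\leq \mathcal{I}^*(v)$. Since $v\in\mathcal{X}_\varphi$ was arbitrary, $u$ minimises $\mathcal{I}^*$ on $\mathcal{X}_\varphi$.

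The main subtlety I expect is exactly the dominated convergence argument: one has to make sure that the integrand $[G(x,u_t)-G(x,u)]/t$ is controlled uniformly in $t$ by something integrable on the whole of $\Omega_c$, which is why the integrability $h\in L^s(\Omega_c)$ with $s'\geq 2^*$ and the embedding in Lemma~\ref{sprimo} are crucial; without them the "linearisation" step would fail at infinity. The convexity of $\mathcal{I}_0$ is what makes the one-sided estimate on $\mathcal{I}_0(u_t)-\mathcal{I}_0(u)$ possible without assuming any differentiability of $\mathcal{I}_0$ at $u$ (which would be delicate on the boundary $\{|\nabla u|=1\}$ of the admissible set if $u$ were not spacelike).
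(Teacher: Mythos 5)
Your proof is correct and follows essentially the same route as the paper: convexity of $\mathcal{I}_0$ to get the one-sided inequality along the segment $u_t$, then dominated convergence (with dominating function $h|v-u|$, integrable by H\"older since $s'\geq 2^*$) to identify the limit of the difference quotient of $\mathcal{G}$ with $\int_{\Omega_c}H^*(v-u)\,\de x$. The only cosmetic difference is that the paper phrases the linearisation step via the mean value theorem with an intermediate point $\sigma(x)$ rather than the fundamental theorem of calculus, which changes nothing of substance.
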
 
\begin{proof}
	Let $v\in \X$; then   $u_\lambda:=u+\lambda(v-u)\in \X$, for all $\lambda\in[0,1]$. Hence for all $\lambda\in (0,1]$
	$$\mathcal I_0(u_\lambda)+\mathcal G(u_\lambda)\geq \mathcal I_0(u)+\mathcal G(u)
	$$
	and    being  $\mathcal I_0$ convex  we obtain
	\beq\label{withlambda}
	\mathcal I_0(v)-\mathcal I_0(u)-\frac 1 \lambda \big(\mathcal G(u_\lambda)-\mathcal G(u)\big)\geq 0.
	\eeq
	Let $\sigma(x)\in [0,1]$ be such that 
	\[
	\frac 1 \lambda \big(\mathcal G(u_\lambda)-\mathcal G(u)\big)= n \int_{\Omega_c}H\big(x,u+\sigma(x)\lambda(v-u)\big)(v-u)\ \de x;
	\]
	hence, by assumption \eqref{h} and Lebesgue's dominated convergence theorem (recall that $v-u\in \mathcal X_0\hookrightarrow L^{\infty}(\Omega_c)$), we get 
	\[ n \int_{\Omega_c}H\big(x,u+\sigma(x)\lambda(v-u)\big)(v-u)\ \de x\longrightarrow \int_{\Omega_c}H^*(x)(v-u)\ \de x,\]
	 as $\lambda\to 0$, which by \eqref{withlambda} implies 
	\[\mathcal I^*(v)-\mathcal I^*(u)\geq 0.\]
\end{proof}

\bere
Let $\bar u\in \mathcal{X}_\varphi$ be a minimizer of $\mathcal{I}$ found in Proposition \ref{pr:min}. By Lemma \ref{min} all the conclusions of \cite[Proposition 2.7]{BoPo16}, suitable modified, hold and, in particular, we have that 
\[
{\rm meas}\{x\in \Omega_c: |\nabla \bar u|=1\}=0
\]
and 
\[
\frac{|\nabla \bar u|^2}{\sqrt{1-|\nabla \bar u|^2}}\in L^1(\Omega_c).
\]
However, this is still not enough to conclude that $\bar u$ is a weak solution of problem \eqref{star} in the sense of Definition \ref{def:ws}.

Finally, we point out that,  by \cite[Proposition~1.1]{BarSim83}, the minimizer is unique if
$H$ is non-decreasing in  $t$.

\ere

We can now conclude the proof of our main result.
\begin{proof}[Proof of Theorem \ref{maint}.]
As a first step we want to prove that if $\varphi$ is the trace of a function in $\mathcal X_\varphi$, then each minimizer $\bar u$ of $\mathcal{I}$ in $\mathcal{X}_\varphi$ is not only weakly spacelike, but  spacelike as well, and so it is a weak solution of \eqref{star}. 

By Lemma~\ref{min} $\bar u$ is a minimizer for the functional $\mathcal I^*$ and, since  $H$  is locally bounded by   \cite[Theorem 3.2]{BarSim83}, we infer that any segment  of a lightlike geodesic possibly contained in the graph of $\bar u$ can be extended until it reaches a point on the graph of $\varphi$. This fact and the local estimates in the proof of  \cite[Theorem 4.1]{BarSim83} imply that  the subset  $K\subset \Omega_c$ where a minimizer could be non-regular  is precisely given by the points which are the projections on $\Omega_c$ of light rays  and lightlike segments (i.e., resp.,  lines or half-lines and  segments whose tangent vectors are  lightlike) contained in the graph $G_{\bar u}$, such that, respectively, no point or at least one  point or both endpoints  belong to $\partial\Omega_c$. Thus, if $\Omega$ is convex, $K$ might  contain only lines or half-lines (without, potentially, their points in the boundary) which are projections of light rays in $G_{\bar u}$, but this is incompatible with the fact that $\lim_{|x|\to\infty}\bar u(x)=0$ (recall Lemma~\ref{sprimo}). If $\Omega$ is not convex and there exists a segment $\overline{xy}$ contained in $K$ whose endpoints $x,y$ belong to $\partial \Omega$,  we would have   
$|\varphi(x)-\varphi(y)|=|\bar u(x)-\bar u(y)| = |x-y|$, in contradiction with the spacelike displacing assumption on $\varphi$.
Therefore,  $K=\emptyset$ and $\bar u$ is spacelike.

Let us now prove the reverse implication of the theorem.
We observe that, according to Definition~\ref{def:ws}, any spacelike weak  solution $\bar u$ of \eqref{star} 
is locally $1$--Lipschitz, then by Lemma~\ref{Lext} $\varphi=\bar u_{|\partial\Omega}$ and if $\Omega$ is not convex and there exists a segment $\overline{xy}$ in $\Omega_c$ connecting two points $x, y\in\partial \Omega$, then $|\varphi(x)-\varphi(y)|=|\bar u(x)-\bar u(y)|<|x-y|$, i.e., $\varphi$ is spacelike displacing in $\Omega_c$.  

\end{proof}
If $\varphi$ is $(1-\epsilon)$--Lipschitz, we can show that it is the trace of a function in $\mathcal X_\varphi$.
\begin{proof}[Proof of Corollary~\ref{mainc}]
We can extend $\varphi$ to a bounded $(1-\epsilon)$--Lipschitz function $\psi$ on $\R^n$ such that $\min\psi=\min\varphi$ and $\max\psi=\max\varphi$ (see, e.g., \cite[p. 243]{Leoni17}). Let us take $R>0$ such that $\overline{\Omega}\subset B(0,R)$, $\epsilon R >\|\varphi\|_\infty$ and a smooth function 
$v:\R^n\to [0,1]$ with compact support in $B(0,2R)$,  equal to $1$ on $B(0,R)$ and such that $|\nabla v|\leq 1/R$. As $|\nabla(v\psi)|\leq \frac{\|\varphi\|_\infty}{R}+1-\epsilon<1$, the restriction of $v\psi$ to $\Omega_c$ belongs to $\X$. Therefore $\varphi$ is the trace of a function in $\mathcal X_\varphi$ and we can conclude by Theorem \ref{maint}.
\end{proof}

\end{document}